\numberwithin{equation}{section}
\def\<{\langle}
\def\>{\rangle}
\def\DD{{\mathcal D}}
\def\KK{{\mathcal K}}
\def\LL{{\mathcal L}}
\def\MM{{\mathcal M}}
\def\TT{{\mathcal T}}
\def\bbC{\mathbb{C}}
\def\bbD{\mathbb{D}}
\def\bbT{\mathbb{T}}
\def\1{\mathbf{1}}
\newcommand{\rank}{\mathop{\rm rank}}
\newcommand{\image}{\mathop{\rm Range}}
\newcommand{\MTT}{\MM\TT}
\newtheorem{lem}{Lemma}[section]
\newtheorem{thm}[lem]{Theorem}
\newtheorem{corollary}[lem]{Corollary}
\theoremstyle{definition}
\newtheorem{rmk}[lem]{Remark}
\newtheorem{example}[lem]{Example}
\title[Matrix valued truncated Toeplitz operators]{Matrix valued truncated Toeplitz operators: basic properties}
\author{Rewayat Khan}
\address{Abdus Salam School of Mathematical Sciences, GC University, Lahore, Pakistan}
\email{rewayat.khan@gmail.com}
\author{Dan Timotin}
\address{Institute of Mathematics Simion Stoilow of the Romanian Academy, Calea Grivitei 21, Bucharest, Romania}
\email{dan.timotin@imar.ro}
\subjclass{Primary 47B35, 47A45, Secondary 47B32, 30J05}
\keywords{truncated Toeplitz operator, model space, inner function, block matrix}
\begin{document}
	\maketitle

\begin{abstract}
	 Matrix valued truncated Toeplitz operators act on vector-valued model spaces. They represent a generalization of block Toeplitz matrices. A characterization of these operators analogue to the scalar case is obtained, as well as the determination of the symbols that produce the zero operator.
\end{abstract}

\section{Introduction}

The Toeplitz operators are compressions of multiplication operators on the space $L^2(\bbT)$  to the Hardy space $H^2$. With respect to the standard exponential basis, their matrices are constant along diagonals; if we truncate such a matrix considering only its upper left finite corner, we obtain classical Toeplitz matrices.

A great deal of attention in the last decade has been attracted by certain   generalizations of Toeplitz matrices, namely compressions of multiplication operators to subspaces of the Hardy space which are invariant under the backward shift. These  ``model spaces'' are of the form $H^2\ominus uH^2$ with $u$ an inner function, and the compressions are called truncated Toeplitz operators. They have been formally introduced in~\cite{Sa}; see~\cite{GR} for a more recent survey. Although classical Toeplitz matrices have often been a starting point for investigating truncated Toeplitz operators, the latter have a much richer and more interesting theory.

In the theory of contractions on a Hilbert space, these model spaces are the scalar case of a more general construction, which provides functional models for arbitrary completely nonunitary contractions. In particular, 
it makes sense to consider matrix-valued innner functions $\Theta$ and the associated model space $K_\Theta=H^2(E)\ominus \Theta H^2(E)$, with $E$ a finite dimensional Hilbert space.

We  develop below the basics of the corresponding matrix-valued truncated Toeplitz operators, which are compressions to $K_\Theta$ of multiplications with matrix-valued functions on $H^2(E)$. 
From an alternate point of view, these operators are generalizations of finite block Toeplitz matrices. 
With respect to the exposition in~\cite{Sa}, one sees that different new questions have to be addressed, mostly related to the noncommutativity of  matrices. 

The structure of the paper is the following.
 After a section of general preliminaries about spaces of vector and matrix valued functions, we give a primer of the properties of the vector-valued  model space. Matrix-valued truncated Toeplitz operators are introduced in Section~\ref{se:MTTO}, where we discuss the main specific difficulties that appear. The next two sections contain the main results of the paper: two intrinsic characterizations of these operators and the identification of symbols that correspond to the null operator. In the final section we determine a class of finite rank matrix-valued truncated Toeplitz operators.

\section{Preliminaries}


Let $\mathbb{C}$ denote the complex plane, $\mathbb{D}={\{z\in\mathbb{C}: |z|<1}\}$ the unit disc, $\mathbb{T}={\{z\in\mathbb{C}:|z|=1}\}$ the unit circle.
Throughout the paper $E$ will denote a fixed Hilbert space,  of finite dimension $d$, and $\LL(E)$ the algebra of bounded linear operators on $E$, which  may be identified with $d\times d$ matrices. Part of the development below may be carried through for $ E $ an infinite dimensional Hilbert space; we will however restrict ourselves to  $ \dim E<\infty $, avoiding certain delicate problems of convergence.

The space $L^{2}(E)$  is defined,  as usual, by
\[
L^{2}(E)={\Big\{F:\mathbb{T}\to E: F(\zeta)=\sum\limits_{-\infty}^{\infty}a_{n}e^{int}:  a_{n}\in E,\quad   \sum\limits_{-\infty}^{\infty}\|a_{n}\|^{2}<\infty }\Big\},
\]
endowed with the inner product
\begin{equation*}
\langle F,G\rangle_{L^{2}(E)}=\frac{1}{2\pi}\int\limits_{0}^{2\pi}\langle F(e^{it}),G(e^{it})\rangle_{E}\,dt.
\end{equation*}
If $\dim E=1$ (i.e $E=\mathbb{C}$) then $L^{2}(E)$ consists of scalar-valued functions and is denoted by $L^{2}$.

The space
\[
L^{\infty}(\mathcal{L}(E))=\{F:\mathbb{T}\to \mathcal{L}(E): \emph{F \emph{is measurable and bounded}}\}
\]
acts on $L^2(E)$ by means of multiplication: to $ \Phi\in L^{\infty}(\mathcal{L}(E)) $ we associate the operator $ M_\Phi $ defined by $ M_\Phi(F)=\Phi F $. 

By viewing $\LL(E)$ as a Hilbert space (endowed with the Hilbert--Schmidt norm), one can also consider the space $L^{2}(\mathcal{L}(E))$, which may be identified with
matrices with all the entries in $L^{2}(\mathbb{T})$. In particular, $ L^{\infty}(\mathcal{L}(E))\subset L^{2}(\mathcal{L}(E)) $.  Alternately, we may view $L^2(\LL(E))$ also as a space of square summable Fourier series with coefficients in $L^2(E)$.

The Hardy space  $H^{2}(E)$ is the subspace of $L^2(E)$ formed by the functions with vanishing negative Fourier coefficients; it can be identified with a space of $E$-valued  functions analytic in $\bbD$, from which  the boundary values can be recovered almost everywhere through radial limits. One can also view  $H^{2}(E)$ as the direct sum of $d$ standard $H^{2}$ spaces.
We have an
orthogonal decomposition
\[
L^{2}(\mathcal{L}(E))=[zH^{2}(\mathcal{L}(E))]^{*}\oplus H^{2}(\mathcal{L}(E)).
\]

Let $S$ denote the forward shift operator $(Sf)(z)=zf(z)$ on $H^{2}(E)$; it is the restriction of $ M_z  $ to $H^{2}(E)$. Its adjoint (the backward shift) is the operator \[(S^{*}f)(z)=\frac{f(z)-f(0)}{z}.
\]
One sees easily that $I-SS^*$ is precisely the orthogonal projection onto the space of constant functions.


An \emph{inner function} will be an element $\Theta\in H^2(\LL(E))$ whose boundary values are almost everywhere unitary operators in $\LL(E)$. The following lemma is a consequence of more general results about factorization of analytic operator valued functions (see~\cite{NF}).

\begin{lem}\label{le:inner functions}
	If $\Theta$ is an inner function, $\Theta_1$ is a bounded analytic function,  and $\Theta\Theta_1$ is constant, then $\Theta$ is also constant.
\end{lem}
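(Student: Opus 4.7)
\emph{Plan.} The strategy is to exploit the unitarity of $\Theta$ on $\bbT$ in order to convert $\Theta\Theta_1 = C$ into an identity whose two sides sit on opposite halves of the Hardy decomposition $L^2(\LL(E)) = [zH^2(\LL(E))]^* \oplus H^2(\LL(E))$, and then to compare Fourier coefficients.

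The main step is the Fourier comparison. Since $\Theta$ is inner, $\Theta^*\Theta = I$ almost everywhere on $\bbT$; left-multiplying $\Theta\Theta_1 = C$ by $\Theta^*$ yields $\Theta_1 = \Theta^* C$ almost everywhere on $\bbT$. Writing $\Theta = \sum_{n \ge 0} A_n z^n$ in $H^2(\LL(E))$, one has $\Theta^* C = \sum_{n \ge 0} A_n^* C\, z^{-n}$, which has only nonpositive Fourier frequencies, whereas $\Theta_1 \in H^2(\LL(E))$ has only nonnegative ones. The orthogonal decomposition recalled above therefore forces both sides to reduce to their common zeroth Fourier coefficient: $\Theta_1 \equiv A_0^* C$ is a constant, and
\[
A_n^* C = 0 \qquad \text{for every } n \ge 1.
\]

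The final step, which I expect to be the point requiring the most care, is to pass from $A_n^* C = 0$ to $A_n = 0$, so that $\Theta \equiv A_0$ is constant. Since $A_n^*$ vanishes on $\image C$, the conclusion follows provided $C$ is surjective (i.e.\ $\image C = E$), which is the matrix analogue of the scalar condition $c \ne 0$. Under that nondegeneracy of the constant $C$, $A_n^* = 0$ and hence $A_n = 0$ for all $n \ge 1$, completing the proof.
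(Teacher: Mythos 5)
The paper offers no internal proof of this lemma --- it is dispatched with a citation to the Sz.-Nagy--Foias factorization theory --- so there is nothing to compare line by line; your Fourier-coefficient argument is the natural elementary route, and its first three steps are correct. Unitarity of $\Theta$ on $\bbT$ does give $\Theta_1=\Theta^*C$ a.e.\ (and $\Theta^*C\in L^2(\LL(E))$ since $\Theta^*$ is bounded), the comparison across $L^2(\LL(E))=[zH^2(\LL(E))]^*\oplus H^2(\LL(E))$ is legitimate, and it yields $\Theta_1\equiv A_0^*C$ together with $A_n^*C=0$ for all $n\ge 1$.

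The genuine gap is the final step: you establish the conclusion only under the extra hypothesis that $C$ is surjective, which is not among the stated hypotheses, so the lemma as stated is not proved. The twist is that some nondegeneracy assumption is truly unavoidable, because the statement is false as literally written: for $E=\bbC^2$ take
\[
\Theta(z)=\begin{pmatrix}1&0\\0&z\end{pmatrix},\qquad \Theta_1=\begin{pmatrix}1&0\\0&0\end{pmatrix},
\]
so that $\Theta$ is inner and $\Theta\Theta_1=\Theta_1$ is a nonzero constant, yet $\Theta$ is not constant (and $\Theta_1=0$ gives counterexamples for every nonconstant inner $\Theta$). What saves the lemma in both places the paper actually uses it is that there $\Theta$ is \emph{pure}, $\|\Theta(0)\|<1$, and the constant is nonzero; purity closes your gap without any surjectivity of $C$. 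Indeed, from $\Theta_1\equiv A_0^*C$ and $\Theta\Theta_1=C$ one gets, for each $x\in E$ and $v:=A_0^*Cx$, the identity $\Theta(z)v\equiv Cx$; unitarity on $\bbT$ forces $\|Cx\|=\|v\|$, while evaluating at $0$ gives $\|Cx\|=\|\Theta(0)v\|\le\|\Theta(0)\|\,\|v\|<\|v\|$ unless $v=0$. Hence $v=0$ and $Cx=0$ for every $x$, i.e.\ $C=0$ and $\Theta_1=0$. So for pure $\Theta$ the hypotheses force $\Theta\Theta_1=0$, which is exactly the contradiction needed in the two applications (where the constant is nonzero); I would either add the hypothesis that $C$ is invertible (your proof then works verbatim) or restate the lemma as ``a pure inner function times a bounded analytic function cannot equal a nonzero constant.''
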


We will also use the following simple result.

\begin{lem}\label{le:from functions to matrices - Theta}
	Suppose $\Phi\in H^2(\LL(E))$, and $\Phi(z)x\in \Theta H^2(E)$ for any $x\in E$. Then there exists $\Phi_1\in H^2(\LL(E))$ such that $\Phi=\Theta\Phi_1$.
\end{lem}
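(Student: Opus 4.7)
The natural idea is to use the fact that the boundary values of $\Theta$ are a.e.\ unitary, so $\Theta(\zeta)^*\Theta(\zeta)=I_E$ a.e.\ on $\bbT$. This suggests defining $\Phi_1$ directly as a pointwise product on the circle.

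Concretely, I would set
\[
\Phi_1(\zeta):=\Theta(\zeta)^*\Phi(\zeta),\qquad \zeta\in\bbT \text{ a.e.}
\]
First one checks that $\Phi_1\in L^2(\LL(E))$: since $\Theta(\zeta)^*$ is unitary almost everywhere, multiplication by it preserves the Hilbert--Schmidt norm, so $\|\Phi_1(\zeta)\|_{HS}=\|\Phi(\zeta)\|_{HS}$ a.e., and integrability follows from $\Phi\in H^2(\LL(E))\subset L^2(\LL(E))$. The identity $\Theta\Phi_1=\Theta\Theta^*\Phi=\Phi$ a.e.\ on $\bbT$ is then immediate. So the only real content is showing that $\Phi_1$ actually lies in $H^2(\LL(E))$, i.e.\ has vanishing negative Fourier coefficients.

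For this, fix any $x\in E$. The hypothesis says $\Phi(\cdot)x\in\Theta H^2(E)$, so there is $g_x\in H^2(E)$ with $\Phi(\zeta)x=\Theta(\zeta)g_x(\zeta)$ a.e. Multiplying by $\Theta(\zeta)^*$ gives $\Phi_1(\zeta)x=g_x(\zeta)$ a.e., hence $\Phi_1(\cdot)x\in H^2(E)$ for every $x\in E$. Applying this to a basis $e_1,\dots,e_d$ of $E$, each ``column'' $\Phi_1 e_j$ of the matrix-valued function $\Phi_1$ lies in $H^2(E)$. Viewing $L^2(\LL(E))$ as matrices with entries in $L^2(\bbT)$ (as recalled in the preliminaries), this means every entry of $\Phi_1$ lies in $H^2$, so $\Phi_1\in H^2(\LL(E))$, as required.

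I do not expect any serious obstacle here: once one thinks of $\Phi_1$ as $\Theta^*\Phi$ pointwise on the circle, the unitarity of the boundary values does all the work, and the analyticity is read off one column at a time from the hypothesis. Lemma~\ref{le:inner functions} is not needed for this statement; the only subtlety worth flagging is that checking $\Phi_1\in H^2(\LL(E))$ really does require the coordinate-wise argument, since one does not a priori know that $\Theta^*$ maps $H^2(\LL(E))$ into itself.
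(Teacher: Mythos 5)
Your proof is correct and is essentially the paper's own argument: the paper also works column by column, taking $\Phi_1$ to be the matrix whose $j$-th column is the $H^2(E)$ function $g_j$ with $\Phi(z)e_j=\Theta(z)g_j(z)$, which is exactly your $\Theta^*\Phi$. Your version spells out the verification (boundary unitarity and entrywise analyticity) a bit more explicitly, but there is no difference in substance.
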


\begin{proof}
	Let $\{e_1, \dots, e_d\}$ be an orthonormal basis in $E$. Then we may take as $\Phi_1$ the matrix having as columns $\Phi(z)e_1, \dots, \Phi(z)e_d$.
\end{proof}

The  \emph{model space} associated to $\Theta$, denoted by $\mathcal{K}_{\Theta}$,  is defined by $\mathcal{K}_{\Theta}=H^{2}(E)\ominus \Theta H^{2}(E)$; the orthogonal projection onto $\KK_\Theta$ will be denoted by $P_\Theta$. The properties of the model space are familiar to many analysts in the scalar case.  On the other hand, the vector valued version is less widely known (despite playing an important role in the Sz.-Nagy--Foias theory of contractions~\cite{NF}); the next section will be a primer of its main properties.

From the point of view of the theory of contractions, the spaces $\KK_\Theta$ represent models for contractions $T$ with $\rank(I-T^*T)= \rank(I-TT^*)=d$ and $T^n\to 0$ strongly. We will not pursue this point of view, which is extensively developed in~\cite{NF}; rather, we will discuss the model space as an intrinsic functional object.

\section{Properties of the model space}\label{se:model space}

For the development in this section we refer to~\cite{Fu}, where  the context is that of model spaces for completely nonunitary contractions.
We will always suppose that the inner function $\Theta$ is \emph{pure}, which means that $\|\Theta(0)\|<1$.

%

The
 model space $\mathcal{K}_{\Theta}$  is  a vector valued reproducing kernel Hilbert space; its reproducing  kernel function, which takes values in $\LL(E)$, is
 \[
 k_{\lambda}^{\Theta}(z)=
 \frac{1}{1-\overline{\lambda}z}(I-\Theta(z)\Theta(\lambda)^{*}).
 \]
 This means that for any $x\in E$ we have $k_{\lambda}^{\Theta}x\in \mathcal{K}_{\Theta}$, and, if $F\in \KK_\Theta$, then
 \[
 \langle F, k_{\lambda}^{\Theta}x\rangle_{\KK_\Theta}=\< F(\lambda),x \>_E.
 \]

%
 If  $\Theta(z)\in H^2(\LL(E))$ we define the new function $\widetilde{\Theta}(z)=\Theta(\overline{z})^{*}$. Then $\widetilde{\Theta}(z)$ is inner if and only if $\Theta(z)$ is inner. In this case the operator $\tau:L^{2}(E)\to L^{2}(E)$ defined by
 \begin{equation}\label{eq:tau}
 (\tau f)(e^{it})=e^{-it}\Theta(e^{-it})^{*}f(e^{-it}),
 \end{equation}
  is unitary and $\tau(\mathcal{K}_{\Theta})=\mathcal{K}_{\widetilde{\Theta}}$; thus $\tau P_{\Theta} =P_{\mathcal{K}_{\widetilde{\Theta}}}\tau$.
 The adjoint of $\tau$ is given by
 \[
 (\tau^{*}f)(e^{it})=e^{-it}\Theta(e^{it}) f(e^{-it}).
 \]

We have already met a class of elements in $\KK_\Theta$, namely the functions $k_\lambda^\Theta x$ for $x\in E$. Another related family is obtained by transporting through $\tau$ the reproducing kernels in $ \mathcal{K}_{\widetilde{\Theta}} $. So we define, for  $x\in E$,  $\widetilde{k_{\lambda}^{\Theta}}x=\tau^* (k_\lambda^{\widetilde{\Theta}}x)$; computations give
\[
\widetilde{k_{\lambda}^{\Theta}}
x=
\frac{1}{z-\lambda}\big(\Theta(z)-\Theta(\lambda)\big) x.
\]

The \emph{model operator} $S_{\Theta}\in\LL(\mathcal{K}_{\Theta})$ is defined  by the formula
\[
(S_{\Theta}f)(z)=P_{\Theta} (zf), \quad f\in \mathcal{K}_{\Theta}.
\]
The adjoint of $S_{\Theta}$  is given by
\[
(S_{\Theta}^{*}f)(z)=\frac{f(z)-f(0)}{z};
\]
it is the restriction of the left shift in $H^{2}(E)$ to the $S^{*}$-invariant subspace $\mathcal{K}_{\Theta}$. We will also use the formula
\begin{equation}\label{eq:comm S_Theta and tau}
\tau S_{\Theta}=S_{\widetilde{\Theta}}^{*}\tau.
\end{equation}

 The action of $S_\Theta$ is more precisely described if we introduce the following subspaces of $\mathcal{K}_{\Theta}$ (the defect spaces of $ S_\Theta $ in the terminology of~\cite{NF}):
\begin{equation}\label{eq:definition of defect spaces}
\begin{split}
\mathcal{D}&=\{(I-\Theta(z)\Theta(0)^{*})x:x\in E\}\\
\widetilde{\mathcal{D}}&=\Big\{\frac{1}{z}(\Theta(z)-\Theta(0))x:x\in E \Big\}.
\end{split}
\end{equation}
Then the following relations hold:
\begin{equation}\label{eq:action of S_Theta}
\begin{split}
(S_{\Theta}f)(z)&=\begin{cases}
z f(z)&\text{ for }f\perp \widetilde{\mathcal{D}},\\
-(I-\Theta(z)\Theta(0)^{*})\Theta(0)x &\text{ for }f=\frac{1}{z}(\Theta(z)-\Theta(0))x\in\widetilde{\DD};
\end{cases}\\
(S^{*}_{\Theta}f)(z)&=
\begin{cases}
\frac{f(z)}{z}&\text{ for }f\perp {\mathcal{D}},\\
-\frac{1}{z}(\Theta(z)-\Theta(0))\Theta(0)^{*}x. &\text{ for }f=(I-\Theta(z)\Theta(0)^{*})x\in {\mathcal{D}},
\end{cases}
\end{split}
\end{equation}
\begin{equation}\label{eq:inclusions S(D), etc}
S_{\Theta}(\widetilde{\mathcal{D}})\subset\mathcal{D},
\quad
S_{\Theta}(\widetilde{\mathcal{D}}^\perp)\subset\mathcal{D}^\perp,
\quad
S_{\Theta}^{*}(\mathcal{D})\subset \widetilde{\mathcal{D}}, \quad S_{\Theta}^{*}(\mathcal{D}^\perp)\subset \widetilde{\mathcal{D}}^\perp,
\end{equation}
\begin{equation}\label{eq:image of defects}
\image (I-S_{\Theta}S_{\Theta}^{*})=\DD, \quad \image (I-S_{\Theta}^*S_{\Theta})=\widetilde{\DD},
\end{equation}
and
\begin{equation}\label{eq:I-S_Theta S_Theta*}
(I-S_{\Theta}S_{\Theta}^{*})f=(I-\Theta(z)\Theta(0)^{*})f(0).
\end{equation} 
 
From~\eqref{eq:image of defects} it follows that there are operators $J, \tilde{J}\in\LL(\KK_\Theta)$, such that 
\begin{align}
P_\DD&= (I-S_\Theta S_\Theta^*)J=J^*  (I-S_\Theta S_\Theta^*), \label{eq:J}  \\
P_{\widetilde{\DD}}&=  (I-S_\Theta^* S_\Theta)\tilde J=\tilde J^*  (I-S_\Theta^* S_\Theta).\label{eq:tilde J}
\end{align}

 Since $(I_{H^{2}(E)}-SS^{*})f(z)=f(0)$, $(I_{\mathcal{K}_{\Theta}}-S_{\Theta}S_{\Theta}^{*})f(z)=(I-\Theta(z)\Theta(0)^{*})f(0)$, and $(I-\Theta(z)\Theta(0)^{*})^{-1}\in H^\infty(\LL(E))$, we may define the operator
 $\Omega:\DD\to E$  by
 \begin{equation}\label{eq:omega}
 \Omega (I_{\mathcal{K}_{\Theta}}-S_{\Theta}S_{\Theta}^{*})f=(I_{H^{2}(E)}-SS^{*})f.
 \end{equation}

The next result is the analog of~\cite[Lemma 2.2]{Sa}.

\begin{lem}\label{le:formulas for S_Theta on k lambda}
	For $\lambda\in \mathbb{D}$ and $x,y\in E$ constant functions in $H^{2}(E)$ we have
	\begin{equation*}
	\begin{split}
S_{\Theta}(k_{\lambda}^{\Theta}x)&=\frac{1}{\overline{\lambda}}k_{\lambda}^{\Theta}x-\frac{1}{\overline{\lambda}}k_{0}^{\Theta}x \text{\quad for } \lambda\not=0,\\
S_{\Theta}(\widetilde{k_{\lambda}^{\Theta}}y) &=\lambda\widetilde{k_{\lambda}^{\Theta}}y-k_{0}^{\Theta}\Theta(\lambda)y.
	\end{split}
	\end{equation*}
\end{lem}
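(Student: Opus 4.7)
The plan is to compute both identities directly by writing out $S_\Theta f = P_\Theta(zf)$ and rearranging the product $zf$ into pieces whose projections onto $\KK_\Theta$ are easy to read off. The only preliminary fact I need beyond what is already in the excerpt is that for a constant vector $c\in E$, viewed as an element of $H^2(E)$, we have $P_\Theta c = k_0^\Theta c$. This follows from the mean value property and the reproducing kernel identity: for any $F\in\KK_\Theta$,
\[
\langle c, F\rangle_{H^2(E)} = \langle c, F(0)\rangle_E = \overline{\langle F(0), c\rangle_E} = \overline{\langle F, k_0^\Theta c\rangle_{\KK_\Theta}} = \langle k_0^\Theta c, F\rangle_{\KK_\Theta}.
\]

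For the first formula, I would exploit the algebraic identity $\frac{z}{1-\bar\lambda z} = \frac{1}{\bar\lambda}\bigl(\frac{1}{1-\bar\lambda z}-1\bigr)$ (which uses $\lambda\neq 0$) to get
\[
z\,k_\lambda^\Theta(z)x = \frac{1}{\bar\lambda}k_\lambda^\Theta(z)x - \frac{1}{\bar\lambda}x + \frac{1}{\bar\lambda}\Theta(z)\Theta(\lambda)^* x.
\]
Now $\Theta(\lambda)^*x$ is a constant vector in $E$, so $\Theta(z)\Theta(\lambda)^*x\in\Theta H^2(E)$ and projects to zero; the constant $x$ projects to $k_0^\Theta x$ by the observation above; and $k_\lambda^\Theta x$ is already in $\KK_\Theta$. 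Applying $P_\Theta$ yields the claimed formula.

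For the second formula, the parallel computation uses $\frac{z}{z-\lambda} = 1+\frac{\lambda}{z-\lambda}$, giving
\[
z\,\widetilde{k_\lambda^\Theta}(z)y = \lambda\,\widetilde{k_\lambda^\Theta}(z)y + \Theta(z)y - \Theta(\lambda)y.
\]
Here $\lambda\widetilde{k_\lambda^\Theta}y$ already lies in $\KK_\Theta$, $\Theta(z)y$ is in $\Theta H^2(E)$ and projects to zero, and the constant $\Theta(\lambda)y\in E$ projects to $k_0^\Theta\Theta(\lambda)y$. Applying $P_\Theta$ yields the second identity.

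There is no real obstacle here; the mild subtlety is simply identifying the projection of a constant vector, which is the step that produces the $k_0^\Theta$ term in each formula. Everything else is straightforward rational function manipulation combined with the defining decomposition $H^2(E)=\KK_\Theta\oplus\Theta H^2(E)$.
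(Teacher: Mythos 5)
Your proof is correct and follows essentially the same route as the paper's: both rest on the identities $\frac{z}{1-\bar\lambda z}=\frac{1}{\bar\lambda}\bigl(\frac{1}{1-\bar\lambda z}-1\bigr)$ and $\frac{z}{z-\lambda}=1+\frac{\lambda}{z-\lambda}$, followed by discarding the $\Theta H^2(E)$ components under $P_\Theta$. The only (minor) difference is that you justify $P_\Theta c=k_0^\Theta c$ explicitly via the reproducing kernel property, where the paper uses it tacitly; note this also follows at once from the decomposition $c=(I-\Theta(z)\Theta(0)^*)c+\Theta(z)\Theta(0)^*c$.
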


\begin{proof}
	
	Since $\Theta(z)\Theta(\lambda)^{*}\frac{1}{1-\overline{\lambda}z}x\in \Theta H^{2}(E)$, we have
	\begin{align*}
	S_{\Theta}k_{\lambda}^{\Theta}x
	&=P_{\Theta} (zk_{\lambda}^{\Theta}x)
	=P_{\Theta} \bigg(\frac{z}{1-\overline{\lambda}z}(I-\Theta(z)\Theta(\lambda)^{*})x\bigg) \\
	&=P_{\Theta} \big(\frac{z}{1-\overline{\lambda}z}x\big)-P_{\Theta} \big(\Theta(z)\Theta(\lambda)^{*}\frac{1}{1-\overline{\lambda}z}\big)x
	=P_{\Theta} (\frac{z}{1-\overline{\lambda}z}x)\\
	&=P_{\Theta} \bigg(\frac{1}{\overline{\lambda}}\big(\frac{1}{1-\overline{\lambda}z}-1\big)x\bigg)
	=\frac{1}{\overline{\lambda}}k_{\lambda}^{\Theta}x-\frac{1}{\overline{\lambda}}k_{0}^{\Theta}x.
	\end{align*}

	For the second equality, we use  $P_{\Theta} (\Theta(z)y)=0$ to obtain
	\begin{align*}
	S_{\Theta}(\widetilde{k_{\lambda}^{\Theta}}y)
	&=P_{\Theta} (z\widetilde{k_{\lambda}^{\Theta}})y
	=P_{\Theta} \bigg(\frac{z}{z-\lambda}\big(\Theta(z)-\Theta(\lambda)\big) y\bigg)\\
	&=P_{\Theta} \big(\Theta(z)-\Theta(\lambda)\big)y +P_{\Theta} \bigg(\frac{\lambda}{z-\lambda}\big(\Theta(z)-\Theta(\lambda)\big)y\bigg)\\
	&=-P_{\Theta} (\Theta(\lambda)y)+\lambda P_{\Theta} \widetilde{k_{\lambda}^{\Theta}}y=\lambda\widetilde{k_{\lambda}^{\Theta}}y-k_{0}^{\Theta}\Theta(\lambda)y. \qedhere
	\end{align*}
\end{proof}

\section {Matrix valued truncated Toeplitz operators}\label{se:MTTO}
Suppose $\Theta$ is a fixed pure inner function.
Since the space  $\mathcal{K}_{\Theta}$ is spanned by the functions $k^\Theta_\lambda x$, $\lambda\in\bbD$, $x\in E$, which are bounded, it follows that the subspace $\mathcal{K}_{\Theta}^{\infty}=\mathcal{K}_{\Theta}\cap H^{\infty}(E)$ of all bounded functions in $\mathcal{K}_{\Theta}$ is dense in $\mathcal{K}_{\Theta}$.

 Suppose now that $\Phi\in L^{2}(\mathcal{L}(E))$. Consider the linear map $f\mapsto P_{\Theta}(\Phi f)$, defined on $\mathcal{K}_{\Theta}^{\infty}$. In case it is bounded, it uniquely determines an operator in $\LL(\KK_\Theta)$, denoted by $A_\Phi^\Theta$, and called a  \emph{matrix-valued  truncated Toeplitz operator} (MTTO).
The function $\Phi$ is then called a \emph{symbol} of the operator. We will usually drop the superscript $\Theta$, as we consider a fixed inner function. We denote  by $\MTT(\KK_\Theta)$ the space of all MTTOs on the model space $\KK_\Theta$.

In the particular case $\Theta(z)=z^n I_E$, the MTTOs obtained are actually familiar objects, namely block Toeplitz matrices of dimension $n$, in which the entries are matrices of dimension $d$. They have have been extensively studied in linear algebra and related areas (see, for instance,~\cite{BS}).

It is immediate that 
\begin{equation}\label{eq:adjoint}
A_\Phi^*=A_{\Phi^*};
\end{equation}
so $\MTT(\KK_\Theta)$ is a selfadjoint linear  space.

The operator $S_\Theta$ is a simple example of a MTTO; it is obtained by taking $\Phi(z)=z I_E$. This example is rather special because the symbol is scalar-valued.

Obviously
MTTOs may be viewed as  matrix valued analogues of the scalar truncated Toeplitz operators introduced by Sarason in~\cite{Sa}. However, that theory cannot be extended smoothly, with analogous proofs; there are several difficulties that one encounters from the very beginning and that  we will point out next.

First, although the space $\Theta H^{2}(E)\subset H^2(E)$ is invariant with respect to $S=M_z$, it is not  invariant for $M_\Phi$ for a general analytic $\Phi$, and consequently $\KK_\Theta$ is not invariant with respect to $M_\Phi^*$; that is, we do not have the relation $A_\Phi^*=M_\Phi^*|\KK_\Theta$. This remark is the main source of difficulties in extending the theory from the scalar to the matrix valued case, so it is useful to illustrate it by a simple example.

\begin{example}\label{ex:1}
	Take $d=2$, and
	\[
	\Theta(z)=
	\begin{pmatrix}
	z&0\\0&z^2
	\end{pmatrix}, \quad
	\Phi(z)=
	\begin{pmatrix}
	0&0\\1&0
	\end{pmatrix}.
	\]
Then 
\[
\Theta H^2(E)=\left\{\begin{pmatrix}
	z f(z)\\ z^2 g(z)
	\end{pmatrix} : f,g\in H^2  \right\}.
\]
So
\[
\begin{pmatrix}
z\\ 0
\end{pmatrix}=\Theta(z)\begin{pmatrix}
1\\ 0
\end{pmatrix}\in \Theta H^2(E)\qquad\text{but\quad} 
\begin{pmatrix}
0\\ z
\end{pmatrix}=\Phi(z)\begin{pmatrix}
z\\ 0
\end{pmatrix}\not\in \Theta H^2(E).
\]	
		\end{example}

This difficulty does not appear in an important particular case. 
It is easy to prove that if $\Phi\in H^2(\LL(E))$ and there exists  $\Phi_{1}\in H^{2}(\mathcal{L}(E))$ such that 
\begin{equation}\label{eq:theta phi=phi' theta}
\Phi\Theta=\Theta\Phi_{1},
\end{equation}
  then $\Theta H^{2}(E)$ is invariant with respect to $M_\Phi$. (In particular, this happens when $\Phi$ commutes with $\Theta$.) Then $M_\Phi^*\KK_\Theta\subset \KK_\Theta$ and therefore $A_\Phi^*=M_\Phi^*|\KK_{\tilde \Theta}\Theta$. It follows that $A_\Phi^*S_\Theta^*=S_\Theta^*A_\Phi^*$, and therefore $A_\Phi S_\Theta=S_\Theta A_\Phi$.

According to the lifting commutant theorem of Sz-Nagy and Foias (see~\cite[Chapter VI]{NF}), the converse is also valid; namely, if $A\in\LL(E)$ and $AS_\Theta= S_\Theta A$, then there exists $\Phi$, even in $H^\infty(\LL(E))$, such that~\eqref{eq:theta phi=phi' theta} is valid and $A=A_\Phi$. 
A similar result holds by passing to the adjoint and using~\eqref{eq:adjoint}.  The next theorem yields then the first large class of MTTOs.

\begin{thm}\label{th.commutant and *commutant}
	The linear space $\{S_\Theta\}'+\{S_\Theta^*\}'$ is contained in $ \MTT(\KK_\Theta) $.
\end{thm}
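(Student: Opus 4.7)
The plan is essentially to read off the theorem from the discussion immediately preceding it, which has already assembled all the necessary ingredients. The argument splits into three observations.

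First, I would take an operator $A$ commuting with $S_\Theta$. The Sz.-Nagy--Foias commutant lifting theorem, as cited just above the theorem statement, produces a function $\Phi\in H^\infty(\LL(E))$ such that the intertwining relation $\Phi\Theta=\Theta\Phi_1$ from~\eqref{eq:theta phi=phi' theta} holds and $A=A_\Phi$. This shows $\{S_\Theta\}'\subset\MTT(\KK_\Theta)$.

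Second, I would handle the commutant of $S_\Theta^*$ by duality: if $B$ commutes with $S_\Theta^*$ then $B^*$ commutes with $S_\Theta$, so by the first step $B^*=A_\Psi$ for some $\Psi\in H^\infty(\LL(E))$. Applying~\eqref{eq:adjoint} gives $B=A_\Psi^*=A_{\Psi^*}\in\MTT(\KK_\Theta)$. Alternatively, one could conjugate with $\tau$ using~\eqref{eq:comm S_Theta and tau} to move between $S_\Theta$ and $S_{\widetilde\Theta}^*$, but the adjoint route is shorter.

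Finally, since~\eqref{eq:adjoint} and linearity of $\Phi\mapsto A_\Phi$ make $\MTT(\KK_\Theta)$ a linear subspace of $\LL(\KK_\Theta)$, the sum $\{S_\Theta\}'+\{S_\Theta^*\}'$ is contained in $\MTT(\KK_\Theta)$, as claimed. There is no real obstacle here: the content of the theorem is entirely carried by the invocation of the commutant lifting theorem, which is a deep but standard tool available in~\cite{NF}; the only small point to verify cleanly is that the $\Phi$ produced by lifting indeed satisfies~\eqref{eq:theta phi=phi' theta}, which is precisely the form in which lifting is stated in the Sz.-Nagy--Foias setting for model operators.
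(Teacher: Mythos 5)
Your argument is correct and coincides with the paper's own reasoning, which is given entirely in the paragraph preceding the theorem: commutant lifting supplies a bounded analytic symbol $\Phi$ with $\Phi\Theta=\Theta\Phi_1$ for any $A\in\{S_\Theta\}'$, the adjoint together with $A_\Phi^*=A_{\Phi^*}$ handles $\{S_\Theta^*\}'$, and linearity of $\MTT(\KK_\Theta)$ gives the sum. No discrepancies to report.
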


Even in the scalar case, the inclusion is in general strict. A simple argument is the fact that, as noted above, the operators in $\{S_\Theta\}'+\{S_\Theta^*\}'$ always have bounded symbols, which is known not to be in general the case (see~\cite{BBK, BCFMT}).

Another difference from the scalar valued case stems from the nonexistence of a canonical conjugation. 
Remember that a \emph{conjugation} on a Hilbert space is a conjugate-linear, isometric and involutive map. If $C$ is a conjugation, then a bounded linear operator $T$  is called \emph{$C$-symmetric} if $T=CT^{*}C$ (see~\cite{GP}).

In the scalar case there exists a canonical conjugation with respect to which all truncated Toeplitz operators are symmetric (see~\cite{Sa}). This is no longer true in our case; actually, it follows from results in~\cite{CFT} that the model operator $S_\Theta$ is complex symmetric if and only if there exists a conjugation $\Gamma$  on $E$, with the property that for all $z\in\bbD$ the matrix $\Theta(z)$ is $\Gamma$-symmetric (or, equivalently, $ \Theta(e^{it})$ is $\Gamma$-symmetric a.e on $\bbT$). In that case  $C_\Gamma$  on $L^{2}(E)$ defined by  $C_\Gamma f=\Theta \bar{z}\Gamma f$ is a conjugation on $L^2(E)$, that leaves  $\KK_\Theta$ invariant, and $S_\Theta$ is $C_\Gamma$-symmetric. 
However, even if such a conjugation $\Gamma$ exists, not all MTTOs are $C_\Gamma$-symmetric. For instance, one can check in Example~\ref{ex:1} that $\Gamma$ defined on $\bbC^2$ by $\Gamma(a_1, a_2)=(\bar a_1, \bar a_2)$ is a conjugation such that $\Theta(z)$ is $\Gamma$-symmetric for all $z\in\bbD$, but 
$A_\Phi$ is not symmetric with respect to the corresponding $C_\Gamma$. The most we can obtain is the following statement, whose proof is straightforward.

\begin{thm} Let $\Gamma$ be is a conjugation on $E$, and $C_\Gamma f=\Theta \bar{z}\Gamma f$. Suppose that a.e on $\mathbb{T}$ $\Theta(e^{it})$ and $\Phi(e^{it})$ are $\Gamma$-symmetric  a.e on $\mathbb{T}$, and $\Phi(e^{it}) \Theta(e^{it})= \Theta(e^{it})\Phi(e^{it})$. Then $A_\Phi$ is $C_\Gamma$-symmetric.
\end{thm}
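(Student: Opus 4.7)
The goal is to show $A_\Phi = C_\Gamma A_\Phi^* C_\Gamma$, which, since $C_\Gamma^2 = I$, is equivalent to the intertwining $C_\Gamma A_\Phi = A_\Phi^* C_\Gamma$ on $\KK_\Theta$. Using the identity $A_\Phi^* = A_{\Phi^*}$ from~\eqref{eq:adjoint} and unfolding the definition of the MTTO, this further reduces to verifying
\[
C_\Gamma\, P_\Theta M_\Phi f \;=\; P_\Theta M_{\Phi^*} C_\Gamma f \qquad \text{for all } f\in\KK_\Theta^\infty.
\]
The plan is to establish this as a consequence of two separate intertwining relations on all of $L^2(E)$: namely, $M_{\Phi^*} C_\Gamma = C_\Gamma M_\Phi$ and $P_\Theta C_\Gamma = C_\Gamma P_\Theta$.

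For the first relation, I would compute both sides pointwise a.e.\ on $\mathbb{T}$. Using the formula for $C_\Gamma$ and then the $\Gamma$-symmetry of $\Phi$ (i.e.\ $\Gamma\Phi(e^{it})\Gamma=\Phi(e^{it})^*$), together with $\Gamma^2 = I$,
\[
(C_\Gamma M_\Phi f)(e^{it})=\Theta(e^{it})e^{-it}\Gamma\bigl(\Phi(e^{it})f(e^{it})\bigr)=\Theta(e^{it})e^{-it}\Phi(e^{it})^*\Gamma f(e^{it}),
\]
whereas $(M_{\Phi^*}C_\Gamma f)(e^{it})=\Phi(e^{it})^*\Theta(e^{it})e^{-it}\Gamma f(e^{it})$. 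These agree precisely because $\Phi^*\Theta=\Theta\Phi^*$ a.e.\ on $\mathbb{T}$, which follows from the hypothesis $\Phi\Theta=\Theta\Phi$ by taking adjoints (using that $\Theta(e^{it})$ is unitary).

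For the second relation, I would invoke the general fact that if $C$ is a conjugation on a Hilbert space and $M$ is a closed subspace with $C(M)\subset M$, then $C(M^\perp)\subset M^\perp$ as well: for $x\in M^\perp$ and $m\in M$, the identity $\langle Cx,m\rangle=\overline{\langle Cm,x\rangle}$ vanishes because $Cm\in M$. Since the preceding discussion already records that $C_\Gamma$ is a conjugation on $L^2(E)$ leaving $\KK_\Theta$ invariant, it also leaves $\KK_\Theta^\perp$ invariant, which gives $P_\Theta C_\Gamma = C_\Gamma P_\Theta$.

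Combining the two, for $f\in\KK_\Theta^\infty$,
\[
A_{\Phi^*}C_\Gamma f \;=\; P_\Theta M_{\Phi^*}C_\Gamma f \;=\; P_\Theta C_\Gamma M_\Phi f \;=\; C_\Gamma P_\Theta M_\Phi f \;=\; C_\Gamma A_\Phi f,
\]
and density of $\KK_\Theta^\infty$ in $\KK_\Theta$ extends the equality to all of $\KK_\Theta$. There is no real obstacle here, consistent with the authors' remark that the argument is straightforward; the only point requiring attention is bookkeeping---recognizing that the two hypotheses ($\Gamma$-symmetry of $\Phi$ and the commutation $\Phi\Theta=\Theta\Phi$) are exactly what make the multiplication intertwining work, after which the projection intertwining is essentially automatic.
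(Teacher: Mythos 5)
Your proof is correct, and it is evidently the ``straightforward'' argument the authors had in mind: the paper states the theorem without proof, and your two intertwining relations ($M_{\Phi^*}C_\Gamma=C_\Gamma M_\Phi$ from the $\Gamma$-symmetry of $\Phi$ plus the commutation hypothesis, and $P_\Theta C_\Gamma=C_\Gamma P_\Theta$ from the invariance of $\KK_\Theta$ recorded just before the statement) are exactly the needed ingredients. The only microscopic slip is that for a conjugation one has $\langle Cx,m\rangle=\langle Cm,x\rangle$ with no complex conjugate, but since you only use that this quantity vanishes, nothing is affected.
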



%
%
%

\section {Characterization of Matrix Valued Truncated Toeplitz Operators}

We  obtain characterizations of MTTOs similar to those obtained in the scalar case by Sarason in~\cite[Theorem 4.1 and 8.1]{Sa}.
We start by recalling that for $\dim E=1$ we have $S_\Theta A_\Phi=A_\Phi S_\Theta$, and therefore 
\[
A_\Phi-S_\Theta A_\Phi S_\Theta^*=
A_\Phi- A_\Phi S_\Theta S_\Theta^*=
A_\Phi(I-  S_\Theta S_\Theta^*) . 
\]
This useful formula is not true in the vector valued context; however, the next lemma provides a useful replacement.

\begin{lem}\label{le:semi-commutator for analytic}
If $\Phi\in H^2(\LL(E))$ then
\begin{equation}\label{eq:semi-commutator for analytic}
A_\Phi-S_\Theta A_\Phi S_\Theta^*= P_\Theta M_\Phi(I-SS^*) | \KK_\Theta.
\end{equation}
\end{lem}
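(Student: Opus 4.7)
The plan is to test both sides of~\eqref{eq:semi-commutator for analytic} on an arbitrary $f$ in the dense subspace $\KK_\Theta^\infty$ and verify pointwise equality in $\KK_\Theta$. Since $I-SS^*$ is the projection onto the constants, the right-hand side applied to $f$ is simply $P_\Theta(\Phi(z)f(0))$, so the goal reduces to showing
\[
A_\Phi f - S_\Theta A_\Phi S_\Theta^* f = P_\Theta(\Phi(z)f(0)).
\]

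First I would decompose $f = f(0) + z g$ where $g := S^*f = S_\Theta^*f$; this lies in $\KK_\Theta$ because $\KK_\Theta$ is $S^*$-invariant, and it is bounded when $f$ is. Multiplying by $\Phi$ gives $\Phi f = \Phi(z)f(0) + z\Phi g$, which is genuinely in $H^2(E)$ since $\Phi\in H^2(\LL(E))$ and $f,g\in H^\infty(E)$, so $P_\Theta$ may be applied termwise to obtain
\[
A_\Phi f = P_\Theta(\Phi(z)f(0)) + P_\Theta(z\Phi g).
\]

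The key step is then to identify $P_\Theta(z\Phi g)$ with $S_\Theta A_\Phi S_\Theta^* f$. Writing $\Phi g = A_\Phi g + \Theta h$ for a suitable $h\in H^2(E)$ (the orthogonal decomposition with respect to $\KK_\Theta \oplus \Theta H^2(E)$), multiplication by $z$ preserves $\Theta H^2(E)$, so $P_\Theta(z\Theta h)=0$. This leaves $P_\Theta(z\Phi g) = P_\Theta(z\, A_\Phi g) = S_\Theta(A_\Phi g) = S_\Theta A_\Phi S_\Theta^* f$, using $A_\Phi g \in \KK_\Theta$ and the definition of $S_\Theta$. Substituting into the previous display and rearranging yields the desired identity, and by density it extends to all of $\KK_\Theta$.

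The only genuinely delicate point is justifying that the decomposition $\Phi g = A_\Phi g + \Theta h$ is legitimate — i.e.\ that $\Phi g \in H^2(E)$ so that orthogonal projection onto $\Theta H^2(E)$ makes sense; this is why the restriction to $\KK_\Theta^\infty$ (and the hypothesis $\Phi \in H^2(\LL(E))$) is essential. Everything else is bookkeeping with the three elementary facts that $I-SS^*$ projects onto constants, that $\Theta H^2(E)$ is $S$-invariant, and that $S_\Theta$ acts as $P_\Theta M_z$.
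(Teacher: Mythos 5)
Your proof is correct and is essentially the paper's argument written out on elements: the paper proves the same identity as a three-line operator computation using $P_\Theta M_z P_\Theta = P_\Theta M_z$ (the $M_z$-invariance of $\Theta H^2(E)$) and the commutativity $M_zM_\Phi M_z^* = M_\Phi M_zM_z^*$, which are exactly the facts you invoke via $P_\Theta(z\Theta h)=0$ and $\Phi(zg)=z(\Phi g)$ after the decomposition $f = f(0)+zg$. No gaps; your extra care about $\Phi g\in H^2(E)$ on $\KK_\Theta^\infty$ is the right point to flag.
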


\begin{proof}
	Since $M_z \Theta H^2(E)\subset \Theta H^2(E)$, we have $P_\Theta M_zP_\Theta=P_\Theta M_z$. Therefore
	\[
	\begin{split}
	A_\Phi-S_\Theta A_\Phi S_\Theta^*
	&=P_\Theta M_\Phi P_\Theta - P_\Theta M_z P_\Theta M_\Phi	M_z^*P_\Theta\\
&	=P_\Theta(M_\Phi-M_zM_\Phi M_z^*)P_\Theta
=P_\Theta(M_\Phi-M_\Phi M_z M_z^*)P_\Theta\\
&=P_\Theta M_\Phi(I-SS^*)P_\Theta. \qedhere
	\end{split}
	\]
\end{proof}

\begin{thm}\label{th:rank two characterization of MTTO}
	The bounded operator $A$ on $\mathcal{K}_{\Theta}$ belongs to $\mathcal{T}(\mathcal{K}_{\Theta})$ if and only if
	\begin{equation}\label{eq:quasicommutation}
	A-S_{\Theta}AS_{\Theta}^{*}=B(I-S_\Theta S_\Theta^*)+(I-S_\Theta S_\Theta^*) B^{\prime *},
	\end{equation}
	for some operators $B, B^{\prime}$ from $\DD$ to $\mathcal{K}_{\Theta}$.
\end{thm}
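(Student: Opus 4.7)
The strategy is to use Lemma~\ref{le:semi-commutator for analytic} to handle the analytic part of the symbol, its adjoint to handle the anti-analytic part, and the operator $\Omega$ from~\eqref{eq:omega} to convert occurrences of $I-SS^*$ into occurrences of $I-S_\Theta S_\Theta^*$. For $(\Rightarrow)$, I would start with $A=A_\Phi$ and decompose $\Phi=F+G^*$ with $F,G\in H^2(\mathcal{L}(E))$. Since $A_\Phi=A_F+A_{G^*}$ and $A_{G^*}=A_G^*$ by~\eqref{eq:adjoint}, Lemma~\ref{le:semi-commutator for analytic} applied to $F$ and (after taking adjoints) to $G$ yields
\[
A_\Phi-S_\Theta A_\Phi S_\Theta^* = P_\Theta M_F(I-SS^*)|_{\mathcal{K}_\Theta} + \bigl[P_\Theta M_G(I-SS^*)|_{\mathcal{K}_\Theta}\bigr]^*.
\]
Writing $(I-SS^*)f$, which is the constant function $f(0)$, as $\iota\Omega(I-S_\Theta S_\Theta^*)f$ (with $\iota\colon E\to H^2(E)$ the embedding as constants), and using that $I-S_\Theta S_\Theta^*$ is selfadjoint, I define $B=P_\Theta M_F\iota\Omega$ and $B'=P_\Theta M_G\iota\Omega$, both from $\mathcal{D}$ to $\mathcal{K}_\Theta$. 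This recasts the identity in the form \eqref{eq:quasicommutation}.

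For $(\Leftarrow)$, given $B,B'$, I would construct candidate symbols $F,G\in H^2(\mathcal{L}(E))$ by specifying the $j$-th columns as $B((I-\Theta(z)\Theta(0)^*)e_j)$ and $B'((I-\Theta(z)\Theta(0)^*)e_j)$ for an orthonormal basis $\{e_1,\dots,e_d\}$ of $E$; each column lies in $\mathcal{K}_\Theta\subset H^2(E)$, so $F$ and $G$ belong to $H^2(\mathcal{L}(E))$. Set $\Phi=F+G^*\in L^2(\mathcal{L}(E))$. Reversing the forward computation shows that the operator $A_\Phi$, defined a priori only on $\mathcal{K}_\Theta^\infty$, satisfies exactly the same quasicommutation identity as $A$. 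Hence $D:=A-A_\Phi$ satisfies $D=S_\Theta D S_\Theta^*$ on $\mathcal{K}_\Theta^\infty$, and iterating, $D=S_\Theta^n D S_\Theta^{*n}$ for every $n$. Since $\Theta$ is pure, $S_\Theta^{*n}\to 0$ strongly on $\mathcal{K}_\Theta$ (via~\eqref{eq:comm S_Theta and tau} applied to the pure $\tilde\Theta$), which forces $D=0$. Thus $A=A_\Phi$ on $\mathcal{K}_\Theta^\infty$, and $A_\Phi$ extends to the bounded operator $A\in\mathcal{T}(\mathcal{K}_\Theta)$.

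The main obstacle is the passage to the limit in the iteration for the backward direction: since $A_\Phi$ with $\Phi\in L^2\setminus L^\infty$ is not a priori bounded, the iteration identity $Df=S_\Theta^n DS_\Theta^{*n}f$ does not immediately give $Df=0$ by a norm estimate, because the only available pointwise bound $\|A_\Phi h\|\le\|h\|_\infty\|\Phi\|_{L^2}$ involves $\|S_\Theta^{*n}f\|_\infty$, which may not tend to zero. A cleaner route is to bypass $D$ and work with the strongly convergent partial sums
\[
\sum_{k=0}^{n-1} S_\Theta^k\bigl[B(I-S_\Theta S_\Theta^*)+(I-S_\Theta S_\Theta^*)B'^*\bigr]S_\Theta^{*k}f,
\]
whose limit is $Af$, and to identify the two sub-series with $P_\Theta(Ff)$ and $P_\Theta(G^*f)$ via a Fourier-coefficient argument matching the construction of $F$ and $G$. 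This identification step, where boundedness of $A_\Phi$ is obtained simultaneously with the equality $A=A_\Phi$, is where the matrix-valued argument departs most markedly from Sarason's scalar proof.
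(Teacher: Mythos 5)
Your proposal is correct and, in its final form, coincides with the paper's own argument: the forward direction uses Lemma~\ref{le:semi-commutator for analytic} together with $\Omega$ to produce $B=P_\Theta M_F\Omega$, $B'=P_\Theta M_G\Omega$, and the backward direction is exactly the telescoping sum $\sum_{n}S_\Theta^n\bigl[B(I-S_\Theta S_\Theta^*)+(I-S_\Theta S_\Theta^*)B'^*\bigr]S_\Theta^{*n}$ identified with $A_{\Phi+\Phi'^*}$ through the Fourier coefficients of $f$ and $g$. You correctly diagnosed why the intermediate idea ($D=S_\Theta DS_\Theta^*$ with an a priori unbounded $A_\Phi$) fails, and the ``cleaner route'' you substitute for it is precisely the proof given in the paper.
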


\begin{proof}
	Suppose that A is a bounded operator on $\mathcal{K}_{\Theta}$ that satisfies~\eqref{eq:quasicommutation}. We have then for any positive integer
	  $n$
	  \[
	  S_{\Theta}^{n}AS^{*n}_{\Theta}-S_{\Theta}^{n+1}AS^{*n+1}_{\Theta}=S_{\Theta}^{n}B(I-S_\Theta S_\Theta^*) S_{\Theta}^{*n}+S_{\Theta}^{n}(I-S_\Theta S_\Theta^*) B^{\prime*}S_{\Theta}^{*n}
	  \]
	  and, adding for $n=0,1,2,...,N$,
	\[
	A=\sum\limits_{n=0}^{N}[S_{\Theta}^{n}B(I-S_\Theta S_\Theta^*) S_{\Theta}^{*n}+S_{\Theta}^{n}(I-S_\Theta S_\Theta^*) B^{\prime*}S_{\Theta}^{*n}]+S_{\Theta}^{N+1}AS^{*N+1}_{\Theta}.
	\]  
	Take $f,g\in \mathcal{K}_{\Theta}^\infty$; then
	\[
	\begin{split}
	\langle Af,g\rangle&=\sum\limits_{n=0}^{N}[\langle S_{\Theta}^{n}B(I-S_\Theta S_\Theta^*) S_{\Theta}^{*n}f,g\rangle+\langle S_{\Theta}^{n}(I-S_\Theta S_\Theta^*) B^{\prime*}S_{\Theta}^{*n}f,g\rangle]\\
	&\qquad+\langle AS^{*N+1}_{\Theta}f,S_{\Theta}^{*N+1}g\rangle.
	\end{split}
	\]
	Since $S_{\Theta}^{*N}\to 0$ strongly as $N\to \infty$ we obtain
	\begin{equation*}
	\langle Af,g\rangle=\sum\limits_{n=0}^{\infty}[\langle S_{\Theta}^{n}B(I-S_\Theta S_\Theta^*) S_{\Theta}^{*n}f,g\rangle+\langle S_{\Theta}^{n}(I-S_\Theta S_\Theta^*) B^{\prime*}S_{\Theta}^{*n}f,g\rangle].
	\end{equation*}
or
	\begin{equation}\label{eq:<Af,g>}
\langle Af,g\rangle=\sum\limits_{n=0}^{\infty}[\langle S_{\Theta}^{n}B(I-S_\Theta S_\Theta^*) S_{\Theta}^{*n}f,g\rangle+\langle S_{\Theta}^{*n}f,B^{\prime}(I-S_\Theta S_\Theta^*) S_{\Theta}^{*n}g\rangle].
\end{equation}

Suppose now $f=\sum_{k=0}^\infty a_n z^n$, with $a_n\in E$. Then $S_{\Theta}^{*n}f=S^{*n}f=\sum\limits_{k=0}^{\infty}a_{n+k}z^{k}$ and, according to~\eqref{eq:I-S_Theta S_Theta*}, 
$
(I-S_\Theta S_\Theta^*) S_{\Theta}^{*n} f= (I-\Theta(z)\Theta(0)^{*})a_{n}$.
Similarly, if $g=\sum_{k=0}^\infty b_n z^n$, with $b_n\in E$, then 
$(I-S_\Theta S_\Theta^*) S_{\Theta}^{*n} g= (I-\Theta(z)\Theta(0)^{*})b_{n}$, whence 
	\begin{equation}
	\langle Af,g\rangle=\sum\limits_{n=0}^{\infty}[\langle S_{\Theta}^{n}B(I-\Theta(z)\Theta(0)^{*})a_{n},g\rangle+\langle f,S_{\Theta}^{n}B^{\prime}(I-\Theta(z)\Theta(0)^{*})b_{n}\rangle].
	\end{equation}
	
	Define then $\Phi, \Phi'\in H^2(\LL(E))$ by
	\[
	\Phi(z)x=B(I-\Theta(z)\Theta(0)^{*})x, \quad
	\Phi'(z)x=B'(I-\Theta(z)\Theta(0)^{*})x, \qquad x\in E.
	\] 
Then
	\[
	\langle Af,g\rangle=\sum\limits_{n=0}^{\infty}[\langle S_{\Theta}^{n}\Phi(z)a_{n},g\rangle+\langle f,S_{\Theta}^{n}\Phi^{\prime}(z)b_{n}\rangle].
	\]
	Using the formula $S_{\Theta}^{n}f=P_{\Theta} S^{n}f$ for $f\in \mathcal{K}_{\Theta}$,
	this becomes
	\begin{align*}
	\langle Af,g\rangle
	&=\sum\limits_{n=0}^{\infty}\langle P_{\Theta} S^{n} (\Phi(z)a_{n}),g\rangle+\sum\limits_{n=0}^{\infty}\langle f,P_{\Theta} S^{n}(\Phi^{\prime}(z)b_{n})\rangle\\
	&=\sum\limits_{n=0}^{\infty}\langle S^{n} (\Phi(z)a_{n}),P_{\Theta} g\rangle+\sum\limits_{n=0}^{\infty}\langle P_{\Theta} f,S^{n}(\Phi^{\prime}(z)b_{n})\rangle\\
	&=\sum\limits_{n=0}^{\infty}\langle  z^{n}\Phi(z)a_{n},g\rangle+\sum\limits_{n=0}^{\infty}\langle f,z^{n}\Phi^{\prime}(z)b_{n}\rangle\\
	&=\langle  \Phi(z)\sum\limits_{n=0}^{\infty}z^{n}a_{n},g\rangle+\langle f,\Phi^{\prime}(z)\sum\limits_{n=0}^{\infty}z^{n}b_{n}\rangle
	=\langle  \Phi f,g\rangle+\langle f,\Phi^{\prime}(z)g\rangle\\
	&=\langle  \Phi(z) f,g\rangle+\langle\Phi^{\prime *}(z) f,g\rangle
	=\langle  (\Phi(z)+\Phi^{\prime *}(z)) f,g\rangle=\langle A_{\Phi+\Phi^{\prime *}}f,g\rangle.
	\end{align*}
	Therefore $A=A_{\Phi+\Phi^{\prime *}}$, as claimed.
	
	\smallskip
Conversely, suppose that $A=A_{\Phi+\Phi^{\prime *}}$, with $\Phi, \Phi'\in H^2(\LL(E))$.For $f\in\KK_\Theta^\infty$ we have
	\begin{equation}\label{eq:A Phi+Phi'-...}
	(A_{\Phi+\Phi^{\prime *}}-S_{\Theta}A_{\Phi+\Phi^{\prime *}}S^{*}_{\Theta})f
	=(A_{\Phi}-S_{\Theta}A_{\Phi}S_{\Theta}^{*})f+(A_{\Phi^{\prime *}}-S_{\Theta}A_{\Phi^{\prime *}}S^{*}_{\Theta})f.
	\end{equation}

According to~\eqref{eq:semi-commutator for analytic}, the first term in~\eqref{eq:A Phi+Phi'-...} is
	\begin{equation*}
	(A_\Phi-S_\Theta A_\Phi S_\Theta^*)f= P_\Theta M_\Phi(I-SS^*) f.
	\end{equation*}

	Using the operator $\Omega$ defined by~\eqref{eq:omega}, we have
	\begin{equation}\label{eq:B Def}
	A_\Phi-S_\Theta A_\Phi S_\Theta^*= P_\Theta M_\Phi\Omega(I-S_\Theta S_\Theta^*)) P_\Theta=B(I-S_\Theta S_\Theta^*),
	\end{equation}
	where   $B=P_\Theta M_\Phi\Omega$.

	Similarly for $\Phi^{\prime}$, which is also analytic, 
	\begin{equation}\label{eq:B' Def}
		A_{\Phi^{\prime}}-S_{\Theta}A_{\Phi^{\prime}}S^{*}_{\Theta}=B'(I-S_\Theta S_\Theta^*), \qquad A_{\Phi^{\prime *}}-S_{\Theta}A_{\Phi^{\prime *}}S^{*}_{\Theta}=(I-S_\Theta S_\Theta^*) B'{}^*,
	\end{equation}
with $B'=P_{\Theta} M_{\Phi^{\prime}}\Omega$.	
	Using~\eqref{eq:B Def} and~\eqref{eq:B' Def} in~\eqref{eq:A Phi+Phi'-...}, we obtain
	\[
(	A_{\Phi+\Phi^{\prime *}}-S_{\Theta}A_{\Phi+\Phi^{\prime *}}S^{*}_{\Theta})f=B(I-S_\Theta S_\Theta^*)f+(I-S_\Theta S_\Theta^*) B^{\prime *}f,
	\]
which ends the proof of the theorem.
\end{proof}

\begin{rmk}\label{re:def B, B'}
The proof actually shows that for the MTTO $A_{\Phi+\Phi^{\prime *}}$ with $\Phi, \Phi'\in H^2(\LL(E))$  the operators $B,B'$ can be obtained as
\begin{equation}\label{eq: B and B'}
B=P_\Theta M_\Phi\Omega, \quad B'=P_{\Theta} M_{\Phi^{\prime}}\Omega.
\end{equation}

\end{rmk}

\begin{rmk}\label{re:alternate characterization}
	An application of the unitary operator $\tau$ defined by~\eqref{eq:tau} produces from Theorem 5.1 an alternate necessary and sufficient condition for the bounded operator $A$ to belong to $\mathcal{T}(\mathcal{K}_{\Theta}),$ namely  
	\begin{equation}\label{eq:S*AS}
	A-S_{\Theta}^{*}AS_{\Theta}=B(I-S_\Theta^* S_\Theta)+(I-S_\Theta^* S_\Theta) B^{\prime*}
	\end{equation}
for some operators $B, B^{\prime}$ from $\widetilde\DD$ to $\mathcal{K}_{\Theta}$.
	
	Indeed, one has to consider the operator $\tilde A =\tau A\tau^*$; then simple computations show that $\tilde{A}\in \MM\TT(\KK_{\tilde \Theta})$ if and only if $A\in\MM\TT(\KK_\Theta)$, and $\tilde{A}$ satisfies~\eqref{eq:quasicommutation}	on $\KK_{\tilde{\Theta}}$ if and only $A$ satisfies~\eqref{eq:S*AS} on $\KK_\Theta$.
\end{rmk}

As in the scalar case, one obtains from Theorem~\ref{th:rank two characterization of MTTO} a characterization of MTTOs by shift invariance.
For a bounded operator on $\mathcal{K}_{\Theta}$ we say that $A$ is \emph{shift invariant} if 
\begin{equation}\label{eq:shift invariance}
f, Sf\in \mathcal{K}_{\Theta}\text{ implies }  Q_{A}(f)=Q_{A}(Sf),
\end{equation}
where $Q_{A}$ is the associated quadratic form on $\mathcal{K}_{\Theta}$, defined by $Q_{A}(f)=\langle A f,f\rangle$ for $f\in \mathcal{K}_{\Theta}$.

Since $Sf\in K_\Theta$ if and only if $S_\Theta f=zf$, it follows from~\eqref{eq:action of S_Theta} that this is equivalent to $f\in\tilde\DD^\perp$, or to $(I-S_\Theta^* S_\Theta)f=0$. If $ f\in \KK^\infty_\Theta $, then $P_{\tilde\DD^\perp}f\in \KK^\infty_\Theta $. Therefore $\KK^\infty_\Theta \cap \tilde\DD^\perp$ is dense in $ \tilde\DD^\perp $, and the shift invariance condition~\eqref{eq:shift invariance} can be checked only for  $f\in \KK^\infty_\Theta \cap \tilde\DD^\perp$.

\begin{thm}\label{th:shift invariance}
	A bounded operator A on $\mathcal{K}_{\Theta}$ belongs to $\mathcal{T}(\mathcal{K}_{\Theta})$ if and only if $A$ is shift invariant.
\end{thm}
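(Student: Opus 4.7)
The plan is to apply Remark~\ref{re:alternate characterization} rather than Theorem~\ref{th:rank two characterization of MTTO} itself, since the shift invariance condition naturally involves $S_\Theta^* A S_\Theta$. First I would recast shift invariance algebraically: combining $Sf\in\KK_\Theta \Leftrightarrow f\in\widetilde\DD^\perp$ with $Sf=S_\Theta f$ on $\widetilde\DD^\perp$ (both recorded in the paragraph preceding the theorem), shift invariance of $A$ is equivalent to
\[
\langle (A - S_\Theta^* A S_\Theta)f, f\rangle = 0 \qquad \text{for every } f \in \widetilde\DD^\perp,
\]
with density of $\KK_\Theta^\infty\cap\widetilde\DD^\perp$ in $\widetilde\DD^\perp$ handling the passage from bounded $f$ to general $f$.

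For the forward direction, if $A \in \MTT(\KK_\Theta)$, Remark~\ref{re:alternate characterization} furnishes $B,B'$ with $A - S_\Theta^* A S_\Theta = B(I - S_\Theta^* S_\Theta) + (I - S_\Theta^* S_\Theta)B'^*$. For $f \in \widetilde\DD^\perp$ the first summand annihilates $f$ since $(I - S_\Theta^* S_\Theta)f = 0$, and the second summand lies in $\image(I - S_\Theta^* S_\Theta) = \widetilde\DD$ by~\eqref{eq:image of defects}, hence is orthogonal to $f$; the quadratic form therefore vanishes on $\widetilde\DD^\perp$. For the converse, set $T = A - S_\Theta^* A S_\Theta$. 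The hypothesis asserts that the quadratic form $\langle Tf,f\rangle$ vanishes on the \emph{complex} subspace $\widetilde\DD^\perp$, and a standard complex polarization upgrades this to $\langle Tf, g\rangle = 0$ for all $f, g \in \widetilde\DD^\perp$, i.e.\ $T(\widetilde\DD^\perp) \subset \widetilde\DD$. In the $\widetilde\DD\oplus\widetilde\DD^\perp$ block decomposition this says the $(\widetilde\DD^\perp,\widetilde\DD^\perp)$ block of $T$ vanishes.

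To finish I would split $T = P_{\widetilde\DD^\perp} T + P_{\widetilde\DD} T$ and invoke the identities $P_{\widetilde\DD} = \widetilde J^*(I - S_\Theta^* S_\Theta) = (I - S_\Theta^* S_\Theta)\widetilde J$ from~\eqref{eq:tilde J} to put each summand in the required form. Because $T(\widetilde\DD^\perp)\subset\widetilde\DD$, we have $P_{\widetilde\DD^\perp}T = P_{\widetilde\DD^\perp}T P_{\widetilde\DD} = (P_{\widetilde\DD^\perp}T\widetilde J^*)(I - S_\Theta^* S_\Theta)$, which has the shape $B(I - S_\Theta^* S_\Theta)$ with $B = (P_{\widetilde\DD^\perp} T \widetilde J^*)|_{\widetilde\DD}$. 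The remaining summand $P_{\widetilde\DD} T$ has image in $\widetilde\DD$ and can be written as $(I - S_\Theta^* S_\Theta) B'^*$ with $B'^* = \widetilde J^* P_{\widetilde\DD} T$; the verification uses that $\widetilde J^*|_{\widetilde\DD}$ inverts $(I - S_\Theta^* S_\Theta)|_{\widetilde\DD}$ (a finite-dimensional bijection, since $\dim\widetilde\DD\le d<\infty$). This yields~\eqref{eq:S*AS}, and Remark~\ref{re:alternate characterization} then gives $A\in\MTT(\KK_\Theta)$.

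The main obstacle is the polarization step: one has to recognize that vanishing of the \emph{diagonal} sesquilinear form on the complex subspace $\widetilde\DD^\perp$ forces $T$ off the $(\widetilde\DD^\perp,\widetilde\DD^\perp)$ block, so the qualitative information ``$T$ maps $\widetilde\DD^\perp$ into $\widetilde\DD$'' is what must be extracted. Once this block structure is identified, the $\widetilde J$-bookkeeping that puts $T$ back into the symmetrized form of~\eqref{eq:S*AS} is essentially mechanical.
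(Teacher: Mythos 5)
Your proposal is correct, and its second half coincides with the paper's own argument: the paper likewise sets $\Delta=A-S_{\Theta}^{*}AS_{\Theta}$, uses polarization to conclude that the compression of $\Delta$ to $\widetilde{\DD}^{\perp}$ vanishes, and then invokes the identities~\eqref{eq:tilde J} for $\tilde J$ to rewrite $\Delta$ in the form~\eqref{eq:S*AS}; your bookkeeping with $P_{\widetilde{\DD}}T$ and $P_{\widetilde{\DD}^{\perp}}T$ is a minor variant of the paper's splitting, and your observation that $\tilde J^{*}|_{\widetilde{\DD}}$ inverts $(I-S_{\Theta}^{*}S_{\Theta})|_{\widetilde{\DD}}$ is a valid (if slightly roundabout) way to justify the second summand --- one could equally take $B'^{*}=\tilde J\Delta$ directly from $P_{\widetilde{\DD}}=(I-S_{\Theta}^{*}S_{\Theta})\tilde J$, as the paper does. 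Where you genuinely diverge is the forward direction: the paper writes $A=A_{\Phi+\Phi'^{*}}$ and computes $Q_{A}(Sf)$ explicitly from the symbol, using $P_{\Theta}Sf=Sf=zf$ and the unimodularity of $z$ to cancel the factor $z$; you instead feed the algebraic identity of Remark~\ref{re:alternate characterization} into the quadratic form and kill both summands using $\ker(I-S_{\Theta}^{*}S_{\Theta})=\widetilde{\DD}^{\perp}$ and $\image(I-S_{\Theta}^{*}S_{\Theta})=\widetilde{\DD}$. Your route has the virtue of making the whole theorem a formal consequence of the characterization~\eqref{eq:S*AS}, so that symbols never appear; the paper's computation is more self-contained (it does not route through the $\tau$-conjugation underlying the Remark) and makes visible the ``Toeplitz'' mechanism $\langle\Phi zf,zf\rangle=\langle\Phi f,f\rangle$ that is the real reason for shift invariance. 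Both are complete proofs.
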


\begin{proof}
	Suppose that $A\in \mathcal{T}(\mathcal{K}_{\Theta}) $, so $A=A_{\Phi+\Phi^{\prime*}}$ for some $\Phi,\Phi^{\prime}\in H^{2}(\mathcal{L}(E)$. For $f\in \KK^\infty_\Theta \cap \tilde\DD^\perp$ we have
	\begin{equation*}
	\begin{split}
	Q_{A}(Sf)
&=\langle ASf,Sf\rangle=\langle A_{\Phi+\Phi^{\prime*}}Sf,Sf\rangle
=\langle A_{\Phi}Sf,Sf\rangle+\langle Sf,A_{\Phi^{\prime}}Sf\rangle\\
&=\langle M_{\Phi}Sf,P_\Theta Sf\rangle+\langle P_\Theta Sf,M_{\Phi^{\prime}}Sf\rangle.
	\end{split}
\end{equation*}
	From $f, Sf\in \mathcal{K}_{\Theta}$  it follows that $P_{\Theta} Sf=Sf=zf$. Therefore 
	\begin{align*}
	Q_{A}(Sf)
	&=\langle z\Phi f,zf\rangle+\langle zf,z\Phi^{\prime}f\rangle=\langle \Phi f,f\rangle+\langle f,\Phi^{\prime}f\rangle\\
	&=\langle A_{\Phi}f,f\rangle+\langle f,A_{\Phi^{\prime}}f\rangle=\langle A_{\Phi}f,f\rangle+\langle A^{*}_{\Phi^{\prime}}f,f\rangle\\
	&=\langle A_{\Phi}f,f\rangle+\langle A_{\Phi^{\prime *}}f,f\rangle=\langle A_{\Phi+\Phi^{\prime*}}f,f\rangle=Q_{A}(f).
	\end{align*}
	
	Conversely, suppose that the bounded operator $A$ on $\mathcal{K}_{\Theta}$ is shift invariant. We will prove that it satisfies  relation~\eqref{eq:S*AS}. Denote $\Delta=A-S_{\Theta}^{*}AS_{\Theta}$. If $f\in\tilde{\DD}^\perp$, then $S_\Theta f=S f$, and
		\begin{align*}
\langle \Delta f,f\rangle
	&=\langle Af,f\rangle-\langle S_{\Theta}^{*}AS_{\Theta}f, f\rangle =\langle Af,f\rangle-\langle AS_{\Theta}f,S_{\Theta} f\rangle\\
	&=\langle Af,f\rangle-\langle ASf,S f\rangle=Q_{A}(f)-Q_{A}(Sf)=0.
	\end{align*}
By the polarization identity we have $\langle \Delta f,g\rangle=0$ for $f,g\in \widetilde{\mathcal{D}}^{\perp}$. Thus the compression of $B$ to $\widetilde{\mathcal{D}}^{\perp}$ is the zero operator, or
\[
(I-P_{\widetilde{\DD}})\Delta (I-P_{\widetilde{\DD}})=0, \qquad
\Delta= (P_{\widetilde{\DD}}\Delta-\Delta)P_{\widetilde{\DD}} - P_{\widetilde{\DD}}\Delta.
\]

Using~\eqref{eq:tilde J}, we obtain
 \[
 \Delta= (P_{\widetilde{\DD}}\Delta-\Delta)\tilde{J}^* (I-S_{\Theta}^{*}S_{\Theta}) - (I-S_{\Theta}^{*}S_{\Theta})\tilde J\Delta
 \]
Therefore $A$ satisfies~\eqref{eq:S*AS}, so $A\in\MM\TT(\KK_\Theta)$.
\end{proof}

\begin{corollary}\label{co:weak closure}
	The space $\MTT(\KK_\Theta)$ is closed in the weak operator topology.
\end{corollary}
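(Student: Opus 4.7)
The plan is to use the shift invariance characterization from Theorem~\ref{th:shift invariance}, since that criterion is phrased purely in terms of values of the quadratic form $Q_A$, which are exactly the quantities preserved under weak operator convergence. The $B,B'$ characterization from Theorem~\ref{th:rank two characterization of MTTO} would be awkward here, since WOT convergence of $A_\alpha$ gives no direct control over corresponding operators $B_\alpha, B'_\alpha$; whereas shift invariance passes to limits essentially for free.

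Let $(A_\alpha)$ be a net in $\MTT(\KK_\Theta)$ converging in the weak operator topology to a bounded operator $A$ on $\KK_\Theta$. By definition of WOT convergence, $\langle A_\alpha h, k\rangle \to \langle A h, k\rangle$ for every $h, k \in \KK_\Theta$. Take any $f \in \KK_\Theta$ with $Sf \in \KK_\Theta$. Applying the above with $(h,k)=(f,f)$ and with $(h,k)=(Sf,Sf)$, I get $Q_{A_\alpha}(f) \to Q_A(f)$ and $Q_{A_\alpha}(Sf) \to Q_A(Sf)$. Since each $A_\alpha$ lies in $\MTT(\KK_\Theta)$, Theorem~\ref{th:shift invariance} gives $Q_{A_\alpha}(f)=Q_{A_\alpha}(Sf)$ for every $\alpha$, and passing to the limit yields $Q_A(f)=Q_A(Sf)$.

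Thus $A$ itself is shift invariant in the sense of~\eqref{eq:shift invariance}, and a second application of Theorem~\ref{th:shift invariance} places $A$ in $\MTT(\KK_\Theta)$, as required. There is no real obstacle here; the key observation is simply that shift invariance is expressed via diagonal matrix coefficients, which form precisely the test functionals defining the weak operator topology (up to polarization).
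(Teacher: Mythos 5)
Your argument is correct and coincides with the paper's own proof: both pass the shift-invariance criterion of Theorem~\ref{th:shift invariance} through the weak limit, using that the quadratic form values $\langle A_\alpha f,f\rangle$ and $\langle A_\alpha Sf,Sf\rangle$ converge under WOT. Nothing further is needed.
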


\begin{proof}
	Suppose the net $A_\alpha\in \MTT(\KK_\Theta)$ converges weakly to $A$. For $f\in\tilde{\DD}^\perp$ we have, by Theorem~\ref{th:shift invariance},
	\[
		\<A_\alpha Sf, Sf\>= \<A_\alpha f, f\>.
	\]
	Passing to the limit it follows that 
		\[
	\<A Sf, Sf\>= \<A f, f\>,
	\]
and the proof is finished by applying again Theorem~\ref{th:shift invariance}.
\end{proof}

\begin{rmk}\label{re:other operators}
	Theorem~\ref{th:shift invariance} allows us to obtain certain other classes of operators in $\MTT(\KK_\Theta)$. Suppose first that $X:\tilde{\DD}\to \DD$, and consider $\widehat{X}\in\LL(\KK_\Theta)$ defined by $ \widehat{X}f=X P_{\tilde{\DD}}f $. From~\eqref{eq:inclusions S(D), etc} it follows that $f\in\tilde{\DD}^\perp$ then $Sf\in\DD^\perp  $ and that  $ \widehat{X}^*f=X^* P_{\DD}f $. Therefore, if $ f\in\tilde{\DD}^\perp $, then
	\[
	Q_{\widehat{X}}(f)=\<Xf,f\>=0, 
	\quad Q_{\widehat{X}}(Sf)=\<\widehat{X}Sf,Sf\>
	=\<Sf,\widehat{X}^*Sf\>=0.
	\]
	Thus $\widehat{X}$ is shift invariant, hence in $\MTT(\KK_\Theta)$ by Theorem~\ref{th:shift invariance}.
The operators $\widehat{X}$ have finite rank; we will obtain a more general family of finite rank MTTOs in Section~\ref{se:finite rank}.

	Further on, the operator 
	\[
	 S_{\Theta, X}= S_\Theta P_{\tilde{\DD}^\perp}+\widehat{X}P_{\tilde{\DD}} =S_\Theta + (\widehat{X}-S_\Theta )P_{\tilde{\DD}}
	\]
	is also in $\MTT(\KK_\Theta)$. The operators $S_{\Theta, X}$ are called \emph{modified shifts}. For $X$ a contraction, they are precisely the perturbations of $S_\Theta$ considered in~\cite{Fu} (and, more generally, in~\cite{BL}). The case in which $X$ is unitary has been investigated at length in~\cite{Ma}; one obtains then vectorial analogues of the Clark unitary operators introduced in~\cite{Clark}.
\end{rmk}

\section{Condition for $A_{\Phi}=0$}

We start with the following statement, similar to the scalar case.

\begin{lem}\label{le:easy part of A_Phi=0}
	If $\Phi\in \Theta H^{2}(\mathcal{L}(E))+[\Theta H^{2}(\mathcal{L}(E))]^{*}$ then $A_{\Phi}=0$.
\end{lem}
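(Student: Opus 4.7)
The plan is to reduce the statement to showing that $\langle A_\Phi f, g\rangle = 0$ for $f,g$ in the dense subspace $\KK_\Theta^\infty$, and then to handle separately the two summands in $\Theta H^{2}(\LL(E)) + [\Theta H^{2}(\LL(E))]^{*}$. Since the assignment $\Phi \mapsto A_\Phi$ is clearly linear (as $P_\Theta$ and left-multiplication are), it suffices to treat each case independently.

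For the analytic case, suppose $\Phi = \Theta\Psi$ with $\Psi \in H^{2}(\LL(E))$. I would take $f \in \KK_\Theta^\infty \subset H^\infty(E)$ and first observe that $\Psi f$ lies in $H^{2}(E)$: boundary multiplication by a bounded vector-valued analytic function preserves analyticity, which is easy to see by multiplying the Fourier series of $\Psi$ (with matrix coefficients) and of $f$ (with vector coefficients) and checking that only nonnegative frequencies appear. Consequently $\Phi f = \Theta\Psi f \in \Theta H^{2}(E)$, which is orthogonal to $\KK_\Theta$, and therefore $A_\Phi f = P_\Theta(\Phi f) = 0$.

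For the co-analytic case, suppose $\Phi \in [\Theta H^{2}(\LL(E))]^{*}$, so that $\Phi^{*} = \Theta\Psi$ for some $\Psi \in H^{2}(\LL(E))$. The previous paragraph gives $A_{\Phi^{*}} = 0$, and applying the identity~\eqref{eq:adjoint} yields $A_\Phi = (A_{\Phi^{*}})^{*} = 0$.

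I do not expect any real obstacle: the only point worth noting carefully is the verification that $\Psi f \in H^{2}(E)$ when $\Psi \in H^{2}(\LL(E))$ and $f$ is a bounded element of $H^{2}(E)$, but this follows from the convolution formula for Fourier coefficients of a product and the Cauchy--Schwarz bound $\|\Psi f\|_{L^{2}(E)} \le \|\Psi\|_{L^{2}(\LL(E))} \|f\|_\infty$ (using the Hilbert--Schmidt norm on $\LL(E)$), which is how $H^{2}(\LL(E))$ is viewed in the preliminaries.
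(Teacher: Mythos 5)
Your proposal is correct and follows essentially the same route as the paper: the analytic summand is killed because $\Theta\Psi f\in\Theta H^{2}(E)\perp\KK_\Theta$, and the co-analytic summand is handled by duality — the paper writes out the pairing $\langle\Phi_2^{*}\Theta^{*}f,g\rangle=\langle f,\Theta\Phi_2 g\rangle=0$ explicitly, which is exactly your appeal to~\eqref{eq:adjoint}. Your extra care in checking $\Psi f\in H^{2}(E)$ for bounded $f$ is a point the paper passes over as obvious, but it is the same argument.
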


\begin{proof} 
	Suppose $\Phi=\Theta\Phi_1+\Phi_2^*\Theta^*$, with $\Phi_1, \Phi_2\in \Theta H^{2}(\mathcal{L}(E))$, and $f\in \KK_\Theta^\infty$. Then 
	\[
	A_\Phi f= P_\Theta\Theta\Phi_1 f+P_\Theta\Phi_2^*\Theta^*f.
	\]
	Obviously $P_\Theta\Theta\Phi_1 f=0$. On the other side, if we take any $g\in \KK_\Theta^\infty$, then 
	\[
	\<P_\Theta\Phi_2^*\Theta^*f, g\>=
	\< \Phi_2^*\Theta^*f, g\>
	=\<f, \Theta\Phi_2 g\>=0.
	\]
	Therefore $A_\Phi=0$.
\end{proof}

We are interested to obtain the converse of this result. A first step is the next lemma.

\begin{lem}\label{le:A_Phi=0 for Phi analytic} Suppose  $A_{\Phi}=0$.
	\begin{itemize}
		\item[(i)]  If $\Phi\in H^{2}(\mathcal{L}(E))$, then $\Phi=\Theta \Phi_{1}$ for some $\Phi_{1}\in H^{2}(\mathcal{L}(E))$.
		
		\item[(ii)] If $\Phi^*\in H^{2}(\mathcal{L}(E))$, then $\Phi=(\Theta \Phi_{1})^*$ for some $\Phi_{1}\in H^{2}(\mathcal{L}(E))$.
	\end{itemize}
\end{lem}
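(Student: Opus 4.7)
My plan for (i) is to combine the semi-commutator identity of Lemma~\ref{le:semi-commutator for analytic} with the hypothesis $A_\Phi=0$, reducing the problem to Lemma~\ref{le:from functions to matrices - Theta}. Since $A_\Phi=0$ gives $A_\Phi - S_\Theta A_\Phi S_\Theta^* = 0$, the identity~\eqref{eq:semi-commutator for analytic} yields $P_\Theta M_\Phi(I-SS^*)|_{\KK_\Theta^\infty}=0$. Since $(I-SS^*)f$ is the constant function equal to $f(0)\in E$, this says that $\Phi\cdot f(0) \in \Theta H^2(E)$ for every $f\in\KK_\Theta^\infty$.

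The next step is to check that as $f$ ranges over $\KK_\Theta^\infty$, the values $f(0)$ exhaust all of $E$; this is where purity of $\Theta$ enters. The bounded reproducing kernel $k_0^\Theta x = (I-\Theta(\cdot)\Theta(0)^*)x$ lies in $\KK_\Theta^\infty$ and evaluates at $0$ to $(I-\Theta(0)\Theta(0)^*)x$. Since $\|\Theta(0)\|<1$, the operator $I-\Theta(0)\Theta(0)^*$ is invertible on $E$, so $\{f(0):f\in\KK_\Theta^\infty\}=E$. Combined with the previous step this gives $\Phi x\in\Theta H^2(E)$ for every $x\in E$, and Lemma~\ref{le:from functions to matrices - Theta} then produces $\Phi_1\in H^2(\LL(E))$ with $\Phi=\Theta\Phi_1$.

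Part (ii) should follow at once by passing to adjoints: by~\eqref{eq:adjoint}, $A_{\Phi^*}=A_\Phi^*=0$, and since $\Phi^*\in H^2(\LL(E))$ we apply part (i) to $\Phi^*$, obtaining $\Phi_1\in H^2(\LL(E))$ with $\Phi^*=\Theta\Phi_1$, i.e.\ $\Phi=(\Theta\Phi_1)^*$. The only non-routine point in the whole argument is the passage from the a priori very weak conclusion ``$\Phi$ kills $\KK_\Theta^\infty$ modulo $\Theta H^2$'' to the much stronger ``$\Phi$ sends every constant $x\in E$ into $\Theta H^2$''; the semi-commutator identity is what performs this reduction, and purity of $\Theta$ is what guarantees that the test values $f(0)$ probed by that identity sweep out all of $E$.
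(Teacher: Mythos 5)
Your proof is correct. It reaches the same key reduction as the paper --- namely that $\Phi$ must map every constant vector of $E$ into $\Theta H^2(E)$, after which Lemma~\ref{le:from functions to matrices - Theta} finishes --- but by a slightly different route. The paper does not invoke the semi-commutator identity at all: it simply applies $A_\Phi$ to the bounded kernel functions $k_0^\Theta x=(I-\Theta(z)\Theta(0)^*)x\in\KK_\Theta$, obtains $P_\Theta\big(\Phi(z)(I-\Theta(z)\Theta(0)^*)\big)x=0$, applies Lemma~\ref{le:from functions to matrices - Theta} to the product $\Phi(z)(I-\Theta(z)\Theta(0)^*)$ to write it as $\Theta G$, and then divides on the right by $(I-\Theta(z)\Theta(0)^*)^{-1}\in H^\infty(\LL(E))$. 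Your version instead passes through Lemma~\ref{le:semi-commutator for analytic}, which collapses the test function $f$ to its value $f(0)$ before $\Phi$ ever acts; you then only need the invertibility of the single constant operator $I-\Theta(0)\Theta(0)^*$ (rather than of the $H^\infty$ function $I-\Theta(z)\Theta(0)^*$) to see that these values sweep out $E$, and Lemma~\ref{le:from functions to matrices - Theta} applies to $\Phi$ itself with no final division step. Both arguments ultimately probe $A_\Phi$ with the same vectors $k_0^\Theta x$ and both rest on purity of $\Theta$; yours trades the explicit $H^\infty$ inversion for a prior appeal to the semi-commutator lemma, which is a clean and economical alternative. The treatment of (ii) by passing to adjoints via~\eqref{eq:adjoint} is exactly what the paper does.
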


\begin{proof} 
	Clearly it is enough to prove (i), since (ii) follows then by passing to the adjoint.

For any $x\in E$ the function 	$ (I-\Theta(z)\Theta(0)^{*})e_{j}\in\KK_\Theta$, and therefore
	\begin{equation*}
	0
	=A_{\Phi}(I-\Theta(z)\Theta(0)^{*})x\\
	=P_{\Theta} (\Phi(z)(I-\Theta(z)\Theta(0)^{*}))x.
	\end{equation*}
	The function $\Phi(z)(I-\Theta(z)\Theta(0)^{*})$ satisfies then the hypothesis of Lemma~\ref{le:from functions to matrices - Theta}. Therefore there exists $G\in H^2(\LL(E))$ such that
	\[
	\Phi(z)(I-\Theta(z)\Theta(0)^{*})=\Theta(z)G(z),
	\]
	or, noting that $(I-\Theta(z)\Theta(0)^{*})^{-1}\in H^\infty(\LL(E))$,
	\[
	\Phi(z)=\Theta(z)G(z)(I-\Theta(z)\Theta(0)^{*})^{-1}
	\]
	and $G(z)(I-\Theta(z)\Theta(0)^{*})^{-1}\in H^{2}(\mathcal{L}(E))$.
\end{proof}

The next result is the desired converse of Lemma~\ref{le:easy part of A_Phi=0}.

\begin{thm}\label{th:operator=0}
	If $A_\Psi=0$, then there exist $\Psi_1, \Psi_2\in \Theta H^{2}(\mathcal{L}(E))$ such that 
	$\Phi=\Theta\Phi_1+\Phi_2^*\Theta^*$.
\end{thm}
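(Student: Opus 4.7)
The plan is to decompose $\Psi$ into its analytic and anti-analytic parts and extract enough structure from $A_\Psi = 0$ to invoke Lemma~\ref{le:A_Phi=0 for Phi analytic} twice. Write $\Psi = \Psi_+ + \Psi_-^*$ with $\Psi_+ \in H^2(\LL(E))$ and $\Psi_- \in zH^2(\LL(E))$. Since $A_{\Psi_-^*} = A_{\Psi_-}^*$ by~\eqref{eq:adjoint}, the assumption $A_\Psi = 0$ gives $A_\Psi - S_\Theta A_\Psi S_\Theta^* = 0$, that is,
\[
\bigl(A_{\Psi_+} - S_\Theta A_{\Psi_+} S_\Theta^*\bigr) + \bigl(A_{\Psi_-}^* - S_\Theta A_{\Psi_-}^* S_\Theta^*\bigr) = 0.
\]
Lemma~\ref{le:semi-commutator for analytic} identifies the first summand with $f \mapsto P_\Theta(\Psi_+ f(0))$; the second is the adjoint of the analogous operator for $\Psi_-$, and a direct computation using the reproducing kernel identity $\langle h, k_0^\Theta x\rangle_{\KK_\Theta} = \langle h(0), x\rangle_E$ shows that it acts as $g \mapsto k_0^\Theta \cdot v(g)$, where $v(g) = \int_\bbT \Psi_-(\zeta)^* g(\zeta)\, d\sigma$. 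The resulting identity
\[
P_\Theta(\Psi_+ f(0)) = -k_0^\Theta\, v(f) \qquad (f \in \KK_\Theta^\infty)
\]
is the basic tool.

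Because its left side depends on $f$ only through $f(0)$, and the map $x \mapsto k_0^\Theta x$ from $E$ into $\KK_\Theta$ is injective (as $I - \Theta(0)\Theta(0)^*$ is invertible), $v(f)$ must also depend only on $f(0)$, so $v(f) = T f(0)$ for some $T \in \LL(E)$. Using $P_\Theta c = k_0^\Theta c$ for every constant $c \in E$, the identity rewrites as $P_\Theta((\Psi_+ + T) f(0)) = 0$. Since $\{f(0) : f \in \KK_\Theta^\infty\}$ exhausts $E$ (take $f = k_0^\Theta (I - \Theta(0)\Theta(0)^*)^{-1} x$ to achieve $f(0) = x$), we conclude $(\Psi_+ + T) x \in \Theta H^2(E)$ for every $x \in E$, and Lemma~\ref{le:from functions to matrices - Theta} gives $\Psi_+ + T = \Theta \Phi_1$ for some $\Phi_1 \in H^2(\LL(E))$.

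Separately, the relation $v(f) = T f(0)$ rewrites as $\int (\Psi_-^* - T) f\, d\sigma = 0$ for all $f \in \KK_\Theta^\infty$; pairing against an arbitrary $y \in E$ and using $y^* \Psi_-^* = (\Psi_- y)^*$ gives $\langle f, (\Psi_- - T^*) y\rangle_{L^2(E)} = 0$. Since $(\Psi_- - T^*) y \in H^2(E)$, this forces $(\Psi_- - T^*) y \in \Theta H^2(E)$ for each $y \in E$, and a second application of Lemma~\ref{le:from functions to matrices - Theta} yields $\Psi_- = T^* + \Theta \Phi_2$ for some $\Phi_2 \in H^2(\LL(E))$. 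Assembling,
\[
\Psi = (-T + \Theta \Phi_1) + (T^* + \Theta \Phi_2)^* = \Theta \Phi_1 + \Phi_2^* \Theta^*,
\]
the desired decomposition. The main technical step to get right is the adjoint computation for the anti-analytic contribution, which allows both pieces of $A_\Psi - S_\Theta A_\Psi S_\Theta^*$ to be compared on a common footing; once that identification is in hand the proof closes by two invocations of Lemma~\ref{le:from functions to matrices - Theta}.
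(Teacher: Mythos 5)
Your proof is correct, and it takes a genuinely different route from the paper's. Both arguments start from the identity $A_\Psi-S_\Theta A_\Psi S_\Theta^*=0$, but the paper processes it through Theorem~\ref{th:rank two characterization of MTTO}: writing $B=P_\Theta M_{\Psi_+}\Omega$, $B'=P_\Theta M_{\Psi_-}\Omega$ with $\Omega$ as in~\eqref{eq:omega}, it obtains $B(I-S_\Theta S_\Theta^*)+(I-S_\Theta S_\Theta^*)B'^*=0$, introduces the map $\chi(T)=P_\DD M_T\Omega P_\DD$ on $\LL(E)$, proves $\chi$ injective via Lemmas~\ref{le:inner functions} and~\ref{le:from functions to matrices - Theta}, deduces surjectivity by a dimension count, and uses the resulting constant matrix $\Phi_0$ (the analogue of your $-T$) to split off $\Theta\Phi_1$, finishing the coanalytic part with Lemma~\ref{le:A_Phi=0 for Phi analytic}(ii). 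You instead identify the coanalytic contribution explicitly as $f\mapsto k_0^\Theta v(f)$ and extract the constant $T$ from the observation that both sides of the resulting identity must factor through $f\mapsto f(0)$; this replaces the bijectivity argument for $\chi$ (and with it the use of $\Omega$ and Lemma~\ref{le:inner functions}) by the injectivity of $x\mapsto k_0^\Theta x$, i.e.\ the purity hypothesis $\|\Theta(0)\|<1$, and it handles the coanalytic factorization inline rather than by quoting Lemma~\ref{le:A_Phi=0 for Phi analytic}. The underlying phenomenon is the same in both proofs --- a constant matrix can be transferred between the analytic and coanalytic summands of the symbol --- but your localization of it is more direct. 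Two small points deserve a sentence in a careful write-up: the splitting $A_\Psi=A_{\Psi_+}+A_{\Psi_-^*}$ and the adjoint identity for the coanalytic summand should be read as identities of vectors for $f\in\KK_\Theta^\infty$ (tested weakly against $\KK_\Theta^\infty$), since $A_{\Psi_+}$ and $A_{\Psi_-}$ need not be individually bounded; and the linearity of $T$ follows because $v$ and $f\mapsto f(0)$ are linear and the latter maps $\KK_\Theta^\infty$ onto $E$.
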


\begin{proof}
	Write $\Psi=\Phi+\Phi'{}^*$, with $\Phi, \Phi'\in H^2(\LL(E))$. Applying Theorem~\ref{th:rank two characterization of MTTO}, it follows (see also Remark~\ref{re:def B, B'}) that if $\Omega$ is defined by~\eqref{eq:omega} and $B, B'$ are given by~\eqref{eq: B and B'}, then
	\begin{equation}\label{eq:A_Phi=0 implies a relation with B, B'}
	B(I-S_\Theta S_\Theta^*))+(I-S_\Theta S_\Theta^*))B'=0.
	\end{equation}
	Therefore the range of $B(I-S_\Theta S_\Theta^*))$ is also contained in $\DD$, so 
	\begin{equation}\label{eq:op=0-1}
		B(I-S_\Theta S_\Theta^*))=P_\DD M_\Phi \Omega P_\DD =P_\Theta M_\Phi \Omega P_\DD=P_\DD M_\Phi .
	\end{equation}

	Consider the map $\chi:\LL(E)\to \LL(\DD)$ defined by
	\[
	\chi(T)= P_\DD M_T \Omega P_\DD,
	\]
	$M_T$ being  multiplication by the constant operator $T$. We claim that $\chi$ is one-to-one. Indeed, suppose $\chi(T)=0$. Since $\Omega$ is invertible from $\DD$ to $E$, it follows that $P_\DD M_T=0$, and so
	\[
	Tx\perp (I-\Theta(z)\Theta(0)^*)y
	\]
	for all $x,y\in E$. In particular, 
	\[
	Tx\perp (I-\Theta(z)\Theta(0)^*)Tx=P_\Theta Tx.
	\]
	Therefore $Tx\in\Theta H^2(E)$. From Lemma~\ref{le:from functions to matrices - Theta} it follows then that
	\[
	T=\Theta G
	\]
	for some $G\in H^\infty(\LL(E))$. If $T$ is not identically 0, this contradicts Lemma~\ref{le:inner functions}.
	
	Being a one-to-one map between spaces of the same dimension $d^2$, $\chi$ is also onto. Therefore there exists a constant matrix $\Phi_0$ such that
	\[
	P_\DD M_{\Phi_0} \Omega P_\DD= 	P_\DD M_{\Phi} \Omega P_\DD
	\]
	and thus
	\begin{equation}\label{eq:A=0 1}
		P_\DD M_{\Phi-\Phi_0} \Omega P_\DD=0.
	\end{equation}

Recall now from~\eqref{eq:op=0-1} that $P_\DD M_\Phi\Omega P_\DD= P_\Theta M_\Phi\Omega P_\DD$. On the other hand, the values of $M_{\Phi_0} \Omega P_\DD$ are constant functions, and so  
\[
P_\DD M_{\Phi_0} \Omega P_\DD=
P_\Theta M_{\Phi_0} \Omega P_\DD.
\]
It follows then from~\eqref{eq:A=0 1} that
\[
P_\Theta M_{\Phi-\Phi_0} \Omega P_\DD=0,
\]
or
\[
P_\Theta M_{\Phi-\Phi_0} x=0
\]
for any $x\in E$. By Lemma~\ref{le:from functions to matrices - Theta} there exists $\Phi_1\in H^2(\LL(E))$ such that $\Phi-\Phi_0=\Theta\Phi_1$.

In particular,  $A_{\Phi-\Phi_0}=0$, and therefore, since $A_\Psi=A_{\Phi+\Phi'{}^*}=0$, we also have $A_{(\Phi'+\Phi_0^*)^*}=0$. Applying Lemma~\ref{le:A_Phi=0 for Phi analytic} (ii), it follows that there exists $\Phi_2\in H^2(\LL(E))$ such that $(\Phi'+\Phi_0^*)^*=\Phi_2^*\Theta^*$. Then
\[
\Phi=(\Phi-\Phi_0)+(\Phi'+\Phi_0^*)^*=\Theta\Phi_1+\Phi_2^*\Theta^*,
\]
which finishes the proof of the theorem.
\end{proof}

As a corollary, we show that every MTTO has a symbol in a certain class. Denote by $\MM_\Theta$ the orthogonal complement of $ \Theta H^2(\LL(E)) $ in $ H^2(\LL(E)) $ endowed with the Hilbert--Schmidt norm. It is easy to see that in a given basis the matrices of functions in $\MM_\Theta$ are characterized by the fact that the columns are functions in~$ \KK_\Theta $.

\begin{corollary}\label{co:standard symbols}
	For any $A\in\MTT(\KK_\Theta) $ there exist $ \Psi_1, \Psi_2\in \MM_\Theta $ such that $ A=A_{\Psi_1+\Psi_2^*} $. If $ \Psi_1', \Psi_2'\in \MM_\Theta $ also satisfy $ A=A_{\Psi'_1+\Psi'_2{}^*} $, then $\Psi_1'=\Psi_1+ k^\Theta_0 X, \Psi_2'=\Psi_2-(k^\Theta_0 X)^* $, with $ X\in\LL(E) $.
\end{corollary}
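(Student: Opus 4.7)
The plan is to handle existence and uniqueness separately, each time leveraging one of the earlier results of the paper.

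For existence, apply Theorem~\ref{th:rank two characterization of MTTO} to obtain $A = A_{\Phi + \Phi'^*}$ with $\Phi, \Phi' \in H^2(\LL(E))$. The orthogonal decomposition $H^2(\LL(E)) = \Theta H^2(\LL(E)) \oplus \MM_\Theta$ yields $\Phi = \Theta F_1 + \Psi_1$ and $\Phi' = \Theta F_2 + \Psi_2$, with $\Psi_1, \Psi_2 \in \MM_\Theta$ and $F_1, F_2 \in H^2(\LL(E))$. Since $\Theta F_1 + F_2^* \Theta^* \in \Theta H^2(\LL(E)) + [\Theta H^2(\LL(E))]^*$, Lemma~\ref{le:easy part of A_Phi=0} gives $A_{\Theta F_1 + F_2^* \Theta^*} = 0$, and therefore $A = A_{\Psi_1 + \Psi_2^*}$.

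For uniqueness, suppose $A = A_{\Psi_1 + \Psi_2^*} = A_{\Psi_1' + \Psi_2'^*}$ with all four functions in $\MM_\Theta$. The difference satisfies $A_{(\Psi_1 - \Psi_1') + (\Psi_2 - \Psi_2')^*} = 0$, and Theorem~\ref{th:operator=0} produces $F_1, F_2 \in H^2(\LL(E))$ with
\[
(\Psi_1 - \Psi_1') + (\Psi_2 - \Psi_2')^* = \Theta F_1 + F_2^* \Theta^*.
\]
Regrouping analytic and antianalytic summands gives
\[
(\Psi_1 - \Psi_1') - \Theta F_1 = F_2^* \Theta^* - (\Psi_2 - \Psi_2')^*,
\]
with the left side in $H^2(\LL(E))$ and the right side in $[H^2(\LL(E))]^*$. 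Their $L^2$-intersection is precisely the constants, so both sides equal some constant matrix $C \in \LL(E)$. Hence $\Psi_1 - \Psi_1' = C + \Theta F_1$ and, after taking adjoints, $\Psi_2 - \Psi_2' = -C^* + \Theta F_2$.

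To conclude, I project the constant $C$, viewed as an element of $H^2(\LL(E))$, onto $\MM_\Theta \oplus \Theta H^2(\LL(E))$. Using $k_0^\Theta(z) = I - \Theta(z)\Theta(0)^*$ one checks directly that $P_{\MM_\Theta} C = k_0^\Theta C$ and $P_{\Theta H^2(\LL(E))} C = \Theta \Theta(0)^* C$. Because $\Psi_1 - \Psi_1' \in \MM_\Theta$, matching components in $\Psi_1 - \Psi_1' = C + \Theta F_1$ forces $F_1 = -\Theta(0)^* C$ and $\Psi_1 - \Psi_1' = k_0^\Theta C$, and the analogous treatment of $\Psi_2 - \Psi_2' = -C^* + \Theta F_2$ yields $F_2 = \Theta(0)^* C^*$ and $\Psi_2 - \Psi_2' = -k_0^\Theta C^*$. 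Setting $X = -C$ delivers the stated relations. The only bookkeeping subtlety is to recognize that the overlap between the analytic and antianalytic halves of $L^2(\LL(E))$ is exactly the constants, and then to split this constant correctly inside $H^2(\LL(E))$; beyond that, the argument is routine.
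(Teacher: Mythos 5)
Your proof is correct. The overall strategy (existence by splitting off the $\Theta H^2(\LL(E))$ and $[\Theta H^2(\LL(E))]^*$ parts, uniqueness via Theorem~\ref{th:operator=0}) is the same as the paper's, but your uniqueness argument is executed along a genuinely different and somewhat cleaner route. The paper reduces the second assertion to computing the intersection $(\MM_\Theta+\MM_\Theta^*)\cap(\Theta H^2(\LL(E))+(\Theta H^2(\LL(E)))^*)$ by applying $P_{\MM_\Theta}$ to both sides of $F+G^*=\Theta F_1+(\Theta G_1)^*$, which shows $F$ and $G$ each have the form $(I-\Theta\Theta(0)^*)\cdot(\text{constant})$; it then needs a separate argument, invoking Lemma~\ref{le:inner functions}, to show that the residual constant $X+Y^*$ vanishes. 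You instead first isolate the overlap $H^2(\LL(E))\cap[H^2(\LL(E))]^*=\LL(E)$, obtaining a single free constant $C$ controlling both differences, and then resolve $C$ inside $H^2(\LL(E))=\MM_\Theta\oplus\Theta H^2(\LL(E))$ via $C=k^\Theta_0C+\Theta\Theta(0)^*C$; this pins down $\Psi_1-\Psi_1'$ and $\Psi_2-\Psi_2'$ simultaneously and makes the coupling between the two constants automatic, so Lemma~\ref{le:inner functions} is never needed. (For existence you route through Theorem~\ref{th:rank two characterization of MTTO} to get an analytic-plus-coanalytic symbol before decomposing, where the paper decomposes an arbitrary $L^2(\LL(E))$ symbol directly; both are fine.) One further point in your favour: your conclusion $\Psi_2'=\Psi_2-k^\Theta_0X^*$ is the form consistent with the requirement $\Psi_2'\in\MM_\Theta$, whereas the expression $(k^\Theta_0X)^*$ appearing in the statement and in~\eqref{eq:K, K*, Theta, Theta*} is coanalytic and appears to be a slip --- the paper's own intermediate computation produces $G=(I-\Theta\Theta(0)^*)Y$ with $Y=-X^*$, which is exactly your analytic $-k^\Theta_0X^*$. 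So your version both proves the result and quietly corrects the statement.
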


\begin{proof}
	Since $L^2(\LL(E))= \MM_\Theta + \Theta H^2(\LL(E)) +\MM_\Theta^* + (\Theta H^2(\LL(E)))^*  $, the first assertion follows by decomposing $\Phi$ accordingly and using Theorem~\ref{th:operator=0}.  
	
	For the second part of the corollary, since 
	\[
	A_{(\Psi_1-\Psi'_1)+(\Psi_2^*-\Psi'_2{}^*)}= 0,
	\]
	and $\Psi_1-\Psi'_1, \Psi_2-\Psi'_2\in\MM_\Theta$,
	it is enough to show that
	\begin{equation}\label{eq:K, K*, Theta, Theta*}
	(\MM_\Theta+\MM_\Theta^*)\cap (\Theta H^2(\LL(E))+(\Theta H^2(\LL(E)))^*)=\{k^\Theta_0 X-(k^\Theta_0 X)^*: X\in\LL(E)\}.
	\end{equation}
	
	Suppose then that the functions $F, G\in\MM_\Theta$, $F_1, G_1\in H^2(\LL(E))$ satisfy
	\[
	F+G^*=\Theta F_1+(\Theta G_1)^*.
	\]
	But we have
	\[
	P_{\MM_\Theta}(F+G^*)= F+ P_{\MM_\Theta}(G^*)=
	F+ P_{\MM_\Theta}(G(0)^*)
	=F+(I-\Theta \Theta(0)^*)G(0)^*,
	\]
	and
	\[
	P_{\MM_\Theta}(\Theta F_1+(\Theta G_1)^*)
	=P_{\MM_\Theta}((\Theta G_1)^*)=
	(I-\Theta \Theta(0)^*)(\Theta(0) G_1(0))^*.
	\]
	Comparing the last  equations, we obtain that
$
	F=(I-\Theta \Theta(0)^*)X
	$
	for some $X\in\LL(E)$. Similarly, $ G=(I-\Theta \Theta(0)^*)Y $ for some $Y\in\LL(E)$. Now, since $ F+G^*\in \Theta H^2+(\Theta H^2)^* $, it follows that the constant matrix $ X+Y^* $ is in $ \Theta H^2+(\Theta H^2)^* $. 
	
	Suppose $ X+Y^* \not=0$. Then there are nonzero functions $F_2, G_2\in \Theta H^2$ such that $ X+Y^*= \Theta F_2+ (\Theta G_2)^* $, or
	\[
	(\Theta G_2)^*=X+Y^*-\Theta F_2.
	\]
	Since the left hand side is coanalytic and the right hand side is analytic, both must be constant (and nonzero). Thus $ \Theta G_2 $ is constant, which contradicts Lemma~\ref{le:inner functions}. Therefore $X=-Y^*$, whence~\eqref{eq:K, K*, Theta, Theta*} is satisfied.
\end{proof}

It is well known (see, for instance,~\cite[Chapter 2]{Pe}), that the space $\KK_\Theta$ is finite dimensional if and only if $\Theta$ is a finite Blaschke--Potapov product. In this case we may use the previous corollary to obtain the dimension of the space $\MTT(\KK_\Theta)$.

\begin{corollary}\label{co:dim mtto}
	If $\dim\KK_\Theta=n$, then $\dim\MTT(\KK_\Theta)=2n^d-d^2$.
\end{corollary}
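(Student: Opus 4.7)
The plan is to use Corollary~\ref{co:standard symbols} to present $\MTT(\KK_\Theta)$ as the image of a map from $\MM_\Theta\oplus\MM_\Theta$, and then carry out a dimension count.

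First I would compute $\dim_{\bbC}\MM_\Theta$. By the characterization given immediately before Corollary~\ref{co:standard symbols}, a matrix function belongs to $\MM_\Theta$ if and only if each of its $d$ columns lies in $\KK_\Theta$. Choosing the columns independently from $\KK_\Theta$, which has complex dimension $n$, gives $\dim_{\bbC}\MM_\Theta=nd$.

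Next I would consider the map
\[
\Lambda:\MM_\Theta\oplus\MM_\Theta\to\MTT(\KK_\Theta),\qquad \Lambda(\Psi_1,\Psi_2)=A_{\Psi_1+\Psi_2^*},
\]
which by the first part of Corollary~\ref{co:standard symbols} is surjective. The second part of the corollary says that two pairs have the same image iff they differ by a pair determined by some $X\in\LL(E)$; to confirm this parametrization is injective I would observe that $k_0^\Theta X=(I-\Theta\Theta(0)^*)X$ vanishes identically only when $X=0$, since otherwise the constant $X$ would factor through $\Theta$, contradicting Lemma~\ref{le:inner functions}. Thus $\ker\Lambda$ is parametrized by $\LL(E)$.

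Finally, the dimension count follows from rank–nullity. The only wrinkle is that $\Psi_2\mapsto\Psi_2^*$ is conjugate-linear, so $\Lambda$ is only $\bbR$-linear; I would therefore work with real dimensions. The domain has real dimension $4nd$, the kernel real dimension $2d^2$, so the image has real dimension $4nd-2d^2$. Since $\MTT(\KK_\Theta)$ is a complex subspace of $\LL(\KK_\Theta)$, its complex dimension is $2nd-d^2$, which specializes correctly to $2n-1$ in the scalar case.

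The only genuine obstacle here is this real/complex bookkeeping. A cleaner alternative that sidesteps it is to set $V_1=\{A_\Psi:\Psi\in\MM_\Theta\}$ and $V_2=V_1^*$: both are complex subspaces of $\MTT(\KK_\Theta)$ of complex dimension $nd$ (the map $\Psi\mapsto A_\Psi$ is injective on $\MM_\Theta$ because Lemma~\ref{le:A_Phi=0 for Phi analytic}(i) forces any $\Psi\in\MM_\Theta$ with $A_\Psi=0$ into $\MM_\Theta\cap\Theta H^2(\LL(E))=\{0\}$), they span $\MTT(\KK_\Theta)$ by Corollary~\ref{co:standard symbols}, and the intersection $V_1\cap V_2$ is computed from Theorem~\ref{th:operator=0} and Corollary~\ref{co:standard symbols} to coincide with $\{A_{k_0^\Theta X}:X\in\LL(E)\}$, a complex space of dimension $d^2$. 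Inclusion–exclusion then gives $\dim_{\bbC}\MTT(\KK_\Theta)=nd+nd-d^2=2nd-d^2$.
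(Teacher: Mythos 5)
Your argument is essentially the paper's own: both present $\MTT(\KK_\Theta)$ as the image of the map $(\Psi_1,\Psi_2)\mapsto A_{\Psi_1+\Psi_2^*}$ defined on $\MM_\Theta\times\MM_\Theta$, identify the kernel with a copy of $\LL(E)$ via the uniqueness part of Corollary~\ref{co:standard symbols}, and count dimensions. The substantive point is that your count $\dim\MM_\Theta=nd$ is the correct one, and it is the paper that errs here: its proof asserts $\dim\MM_\Theta=(\dim\KK_\Theta)^d=n^d$, whereas an element of $\MM_\Theta$ is just a free choice of $d$ columns from the $n$-dimensional space $\KK_\Theta$, so the dimension is $nd$. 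Consequently the formula in the statement should read $2nd-d^2$, as you obtain, not $2n^d-d^2$. One can confirm this on $\Theta(z)=z^mI_E$, where $n=md$ and $\MTT(\KK_\Theta)$ is the space of $m\times m$ block Toeplitz matrices with $d\times d$ blocks, of dimension $(2m-1)d^2=2nd-d^2$; for $m=d=2$ this gives $12$, while $2n^d-d^2=28$. Your two refinements are also genuine improvements on the write-up: the map $L$ is only $\bbR$-linear because $\Psi_2\mapsto\Psi_2^*$ is conjugate-linear, a point the paper silently ignores (its rank--nullity argument survives because domain, kernel and image are all complex subspaces, so real dimensions are uniformly twice the complex ones, exactly as you argue), and your alternative via $V_1$ and $V_2=V_1^*$ with inclusion--exclusion sidesteps the issue entirely. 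In short: your proof is correct and follows the paper's route, but the exponent in the stated formula is a slip in the paper that your computation exposes.
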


\begin{proof}
	First, it is immediate that $\dim\MM_\Theta=(\dim \KK_\Theta)^d=n^d$.
Consider then the linear map $L:\MM_\Theta\times\MM_\Theta\to \MTT(\KK_\Theta)$ defined by
\[
L(\Phi_1, \Phi_2)=A_{\Phi_1+\Phi_2^*}.
\]
According to Corollary~\ref{co:standard symbols}, $L$ is onto, while 
\[
\ker L=\{k^\Theta_0 X -(k^\Theta_0 X)^* :X\in\LL(E)  \}.
\]
The proof is finished by noting that  $\dim(\MM_\Theta\times\MM_\Theta)=2n^d$ and $\dim\ker L=\dim\LL(E)=d^2$.	
	\end{proof}

\section{A class of finite rank operators}\label{se:finite rank}

As noted in Section~\ref{se:model space}, for any $ x\in E $
we have $ k^\Theta_\lambda x\in \KK_\Theta$ and $ \widetilde{k^\Theta_\lambda}\in \KK_\Theta $. Therefore
the matrix valued analytic functions $ k^\Theta_\lambda $ and $ \widetilde{k^\Theta_\lambda} $ may be considered as bounded operators from $ E $ to $ \KK_\Theta $. To avoid any confusion, we will denote these operators by $ K_\lambda, \widetilde{K}_\lambda: E\to \KK_\Theta $; therefore $ K_\lambda^*, \widetilde{K}_\lambda^*:  \KK_\Theta\to E $. With these notations,
the relations in Lemma~\ref{le:formulas for S_Theta on k lambda} become equalities between operators from $E$ to $ \KK_\Theta $:
\begin{equation}\label{eq:new K_lambda}
S_\Theta K_\lambda=\frac{1}{\bar{\lambda}}K_\lambda- \frac{1}{\bar{\lambda}}K_0,\quad
S_\Theta \widetilde{K}_\lambda= \lambda \widetilde{K}_\lambda - K_0\Theta(\lambda).
\end{equation}

 We  obtain then a class of finite rank operators in 
 $\MTT(\KK_\Theta)$.

\begin{thm}\label{th:rank one1} For any $Y\in\LL(E)$ and $\lambda\in \mathbb{D}$ the operators 
	$ K_\lambda Y \widetilde{K}_\lambda^* $ and $ \widetilde{K}_\lambda Y  K_\lambda^* $ have rank equal to the rank of $ Y$ and
	belong to $\MTT(\KK_\Theta)$.
\end{thm}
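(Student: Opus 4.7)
Two things need proving: the rank assertion, and membership in $\MTT(\KK_\Theta)$. For the rank, the plan is to observe that both $K_\lambda$ and $\widetilde K_\lambda$ are injective operators from $E$ into $\KK_\Theta$. For $K_\lambda$, evaluating $k^\Theta_\lambda x=0$ at $z=0$ yields $(I-\Theta(0)\Theta(\lambda)^*)x=0$, and $\|\Theta(0)\Theta(\lambda)^*\|\le\|\Theta(0)\|<1$ (by pureness of $\Theta$ combined with $\|\Theta(\lambda)\|\le 1$) forces $x=0$. For $\widetilde K_\lambda$, the vanishing of $\widetilde{k^\Theta_\lambda}y$ says $\Theta(z)y$ is constant in $z$; if $y\neq 0$, placing it as one column of a nonzero constant matrix $\Phi_1\in\LL(E)$ makes $\Theta\Phi_1$ constant, so Lemma~\ref{le:inner functions} would force $\Theta$ itself to be constant, contradicting $\|\Theta(0)\|<1$. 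Injectivity of $\widetilde K_\lambda$ makes $\widetilde K_\lambda^*$ surjective, so $\rank(K_\lambda Y\widetilde K_\lambda^*)=\rank(K_\lambda|_{\image Y})=\rank Y$, and similarly for $\widetilde K_\lambda YK_\lambda^*$.

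For membership in $\MTT(\KK_\Theta)$ when $\lambda\neq 0$, the plan is to verify the hypothesis of Theorem~\ref{th:rank two characterization of MTTO}. Using~\eqref{eq:new K_lambda} and its adjoint $\widetilde K_\lambda^* S_\Theta^*=\bar\lambda\widetilde K_\lambda^*-\Theta(\lambda)^*K_0^*$, a direct expansion gives
\[
K_\lambda Y\widetilde K_\lambda^* - S_\Theta\bigl(K_\lambda Y\widetilde K_\lambda^*\bigr)S_\Theta^* = K_0Y\widetilde K_\lambda^* + \tfrac{1}{\bar\lambda}(K_\lambda-K_0)Y\Theta(\lambda)^*K_0^*.
\]
The first summand carries $K_0$ on the left and the second carries $K_0^*$ on the right. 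Using~\eqref{eq:J} to write $K_0=(I-S_\Theta S_\Theta^*)JK_0$ and $K_0^*=K_0^*J^*(I-S_\Theta S_\Theta^*)$ converts them into $(I-S_\Theta S_\Theta^*)B'^*$ and $B(I-S_\Theta S_\Theta^*)$ respectively, with operators $B,B'\colon\DD\to\KK_\Theta$ read off directly, yielding the decomposition required by Theorem~\ref{th:rank two characterization of MTTO}.

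The case $\lambda=0$ escapes this calculation because of the $\bar\lambda^{-1}$ factor, but is actually simpler on its own. Here $A=K_0Y\widetilde K_0^*$ has image contained in $\DD$ and annihilates $\widetilde{\DD}^\perp$ (since $\image\widetilde K_0=\widetilde{\DD}$ makes $\widetilde K_0^*$ vanish there), so $A=\widehat X$ with $X=A|_{\widetilde{\DD}}\colon\widetilde{\DD}\to\DD$ of the type discussed in Remark~\ref{re:other operators}, which puts $A$ in $\MTT(\KK_\Theta)$. Finally, $\widetilde K_\lambda YK_\lambda^*=(K_\lambda Y^*\widetilde K_\lambda^*)^*$ belongs to $\MTT(\KK_\Theta)$ by selfadjointness~\eqref{eq:adjoint} once the first type has been treated. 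The principal obstacle I anticipate is the algebraic bookkeeping in the $\lambda\neq 0$ expansion, in particular checking that the $J$-factorization delivers operators with domain $\DD$ (rather than all of $\KK_\Theta$) as required by Theorem~\ref{th:rank two characterization of MTTO}; the case split itself is an artefact of the asymmetric commutation relations~\eqref{eq:new K_lambda}.
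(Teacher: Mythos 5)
Your proof is correct, and for the main case it follows the paper's route exactly: expand $K_\lambda Y\widetilde K_\lambda^*-S_\Theta K_\lambda Y\widetilde K_\lambda^*S_\Theta^*$ via~\eqref{eq:new K_lambda} to get $K_0Y\widetilde K_\lambda^*+\frac{1}{\overline\lambda}(K_\lambda-K_0)Y\Theta(\lambda)^*K_0^*$, then absorb the $K_0$, $K_0^*$ factors into $I-S_\Theta S_\Theta^*$ so that Theorem~\ref{th:rank two characterization of MTTO} applies; where you reuse $J$ from~\eqref{eq:J}, the paper introduces an ad hoc inverse $Z$ of $I-S_\Theta S_\Theta^*$ on $\DD$, which is the same device. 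You diverge in two places, both legitimately. First, the paper leaves the rank assertion to the reader, whereas you supply it by proving injectivity of $K_\lambda$ and $\widetilde K_\lambda$; your use of Lemma~\ref{le:inner functions} for $\widetilde K_\lambda$ mirrors how the paper applies that lemma in the proof of Theorem~\ref{th:operator=0}, and you correctly note the implicit requirement $\Phi_1\neq 0$. Second, for $\lambda=0$ the paper passes to weak limits and invokes Corollary~\ref{co:weak closure}, while you identify $K_0Y\widetilde K_0^*$ directly as an operator $\widehat X$ from Remark~\ref{re:other operators} using $\image K_0=\DD$ and $\ker\widetilde K_0^*=\widetilde{\DD}^{\perp}$; the paper itself records this identification immediately after the theorem, so your variant is sanctioned and avoids the limit argument altogether. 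The reduction of the second operator to the first via~\eqref{eq:adjoint} is also how the paper disposes of it.
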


\begin{proof} Obviously
	it suffices to consider the first operator. We  apply Theorem~\ref{th:rank two characterization of MTTO}. Assuming $\lambda\not=0$ and using~\eqref{eq:new K_lambda}, we obtain
		\begin{align*}
	K_\lambda Y \widetilde{K}_\lambda^*- S_\Theta K_\lambda Y \widetilde{K}_\lambda^* S_\Theta^*
	&=K_\lambda Y \widetilde{K}_\lambda^*
	-\Big(\frac{1}{\overline{\lambda}}K_\lambda -\frac{1}{\overline{\lambda}}
	K_0\Big)Y
	\big(\bar{\lambda}\widetilde{K}_\lambda^*- \Theta(\lambda)^* K_0^*
	\big)\\
	&= K_0Y \widetilde{K}_\lambda^* + \frac{1}{\overline{\lambda}}
(K_\lambda-K_0)Y  \Theta(\lambda)^*K_0^*
	\end{align*}

	But the range of $K_0$ is contained in $\DD=\image(I-S_{\Theta}S_{\Theta}^{*})$, and $ I-S_{\Theta}S_{\Theta}^{*} $ is invertible on $ \DD $; if we denote that inverse by $Z$, we have  $K_0=(I-S_\Theta S_\Theta^*))ZK_0$, $K_0^*=K_0^*Z^*(I-S_\Theta S_\Theta^*))$, and
	\[
	\begin{split}
	K_\lambda Y \widetilde{K}_\lambda^*- S_\Theta K_\lambda Y \widetilde{K}_\lambda^* S_\Theta^*&=
	(I-S_\Theta S_\Theta^*))ZK_0Y \widetilde{K}_\lambda^*\\&\qquad +
	\frac{1}{\overline{\lambda}}
	(K_\lambda-K_0)Y  \Theta(\lambda)^*K_0^*Z^*(I-S_\Theta S_\Theta^*)).
	\end{split}
	\]
	Equation~\eqref{eq:quasicommutation} is therefore satisfied by taking $B=	\frac{1}{\overline{\lambda}}
	(K_\lambda-K_0)Y  \Theta(\lambda)^*K_0^*Z^*$, $ B'=\widetilde{K}_\lambda Y^*K_0^*Z^* $. The proof in the case $\lambda\not=0$ is concluded by invoking Theorem~\ref{th:rank two characterization of MTTO}. For $\lambda=0$, we may note that $K_\lambda\to K_0$ and $ \widetilde{K}_\lambda\to \widetilde{K}_0 $ weakly when $\lambda\to 0$, and use Corollary~\ref{co:weak closure}.
	
	The assertion concerning the rank is left to the reader.
\end{proof}

We have thus obtained a class of finite rank MTTOs. For $\lambda=0$, they are precisely the operators $ \widehat{X} $ defined in Remark~\ref{re:other operators}.

\begin{rmk}
In case $Y$ has rank 1, say $Y=x\otimes y$, we have 
\[
K_\lambda Y \widetilde{K}_\lambda^*= k^\Theta_\lambda x\otimes\widetilde{k_{\lambda}^{\Theta}}y
\]
and we obtain a family of rank one operators similar to the scalar case. 
As in the scalar case, we may obtain supplementary operators of rank one in case $ k^\Theta_\lambda x $ and $ \widetilde{k^\Theta_\lambda y} $ have limits in $\KK_\Theta$ when $ \lambda $ tends nontangentially to a point $ \mu $ on the unit circle. 
One can show that this is equivalent to the conditions
\[
\begin{split}
&\liminf_{\lambda\to\mu}\frac{1}{1-|\lambda|^2} (\|x\|^2-\|\Theta(\lambda)^*x\|^2) <\infty,\\
&\liminf_{\lambda\to\mu}\frac{1}{1-|\lambda|^2} (\|y\|^2-\|\Theta(\lambda)y\|^2) <\infty.
\end{split}
\]
Moreover,  this procedure allows one to obtain all rank one operators in $\MTT(\KK_\Theta)$.
The proof is rather tedious and will be presented elsewhere.
\end{rmk}


\begin{thebibliography}{99}




\bibitem{BL} J.A. Ball, A. Lubin, On a class of contractive perturbations of restricted shifts,  \emph{Pacific J. Math.} 63 (1976),  309--323.

\bibitem{BBK} A. Baranov, R. Bessonov, and V. Kapustin, Symbols of truncated Toeplitz operators, \emph{J. Funct. Anal.} 259 (2010), 2673--2701.

\bibitem{BCFMT} A. Baranov, I. Chalendar, E. Fricain, J. Mashreghi, and D. Timotin, Bounded symbols and reproducing kernels thesis for  truncated Toeplitz operators, \emph{J. Funct. Anal.} 261 (2011), 3437--3456.

\bibitem{BS} A. B\"ottcher, B. Silbermann, \emph{Introduction to Large Truncated Toeplitz Matrices}, Universitext. Springer-Verlag, New York, 1999.

\bibitem{CFT} N. Chevrot, E. Fricain, D. Timotin, The characteristic function of a complex symmetric contraction,  \emph{Proc. Amer. Math. Soc.} 135 (2007),  2877--2886.

\bibitem{Clark} D.N. Clark, One dimensional perturbations of restricted shifts, \emph{J. Analyse Math.} 25 (1972), 169--191.

\bibitem{CGRW} J.A. Cima, S.R. Garcia, W.T. Ross, W.R Wogen,  Truncated Toeplitz operators:
	spatial isomorphism, unitary equivalence,
	and similarity, \emph{Indiana Univ. Math. J.} 59 (2010), 595--620.

\bibitem{Fu}
P. A. Fuhrmann, On a class of finite dimensional contractive perturbations of restricted shift of finite multiplicity,  \emph{Israel J. Math.} 16 (1973), 162--175. 

\bibitem{GP}
{S.R. Garcia and M. Putinar}, Complex symmetric operators and applications, {\em Trans. Amer. Math. Soc. }358  (2006),  1285--1315.

\bibitem{GR} S.R. Garcia and W.T. Ross, Recent progress on truncated Toeplitz operators,  \emph{Blaschke Products and their Applications},
Fields Inst. Commun., 65, Springer, New York, 2013, 275--319.

\bibitem{Ma} R.T.W. Martin, Unitary perturbations of compressed $n$-dimensional shifts,  \emph{Complex Anal. Oper. Theory} 7 (2013),  765--799.

\bibitem{Pe} V.V. Peller, \emph{Hankel Operators and their Applications}, Springer Verlag, New York, 2003.

\bibitem{Sa} D. Sarason, Algebraic properties of truncated Toeplitz operators,  \emph{Oper. Matrices} 1
(2007),  491--526.

\bibitem{NF} B. Sz.-Nagy,  C. Foias, H. Bercovici, L. K\'erchy,  \emph{Harmonic Analysis of Operators on Hilbert Space.}  Revised and enlarged edition. Universitext. Springer, New York, 2010.



\end{thebibliography}
\end{document}